\newtheorem{thm}{Theorem}[section]
\newtheorem{lem}[thm]{Lemma}
\newtheorem{prop}[thm]{Proposition}
\newtheorem{cor}[thm]{Corollary}
\theoremstyle{definition}
\newtheorem{example}{Example}
\theoremstyle{definition}
\theoremstyle{remark}
\theoremstyle{definition}
\newtheorem{defn}{Definition}[section]
\newcommand*{%
    
    \import{./images/}{.pdf_tex}
}[2][1]{%
    
    \import{./images/}{#2.pdf_tex}
}
\newcommand{\Z}{\mathbb{Z}}
\newenvironment{manualprop}[1]{%
  \manualtheoreminner
}{\endmanualtheoreminner}
\title{Upper Bounds for Totally Symmetric Sets}
\author{Kevin Kordek, Qiao Li, and Caleb Partin}
\date{\today}
\begin{document}

\maketitle

\begin{abstract}
    Totally symmetric sets are a recently introduced tool for studying homomorphisms between groups. In this paper, we give full classifications of totally symmetric sets in certain families of groups and bound their sizes in others. As a consequence, we derive restrictions on possible homomorphisms between these groups. One sample application of our results is that any homomorphism of a braid group to a direct product of solvable groups must have cyclic image.
\end{abstract}

\section{Introduction}
\label{sec:intro}



\hspace{\parindent} Given any two groups $G$ and $H$, understanding all the possible homomorphisms $G \xrightarrow[]{} H$ is a naturally important but technically difficult task. This motivates using a carefully-chosen subset of the information in the given groups to constrain and eventually classify homomorphisms. As a simple example, cyclic groups and subgroups map to cyclic images. Analogously, the motivation for using totally symmetric sets to study homomorphisms follows this pattern. 

\begin{defn}
A totally symmetric set of a group $G$ is a finite subset $\{x_1, x_2, ..., x_n\}$ of $G$ that satisfies two key properties:
\begin{itemize}
    \item The elements of the subset commute pairwise.
    \item Any permutation of the subset can be achieved through conjugation: for any element of the symmetric group on n letters $\sigma\in S_n$, there exists some $g\in G$ such that $x_{\sigma(i)}=gx_ig^{-1}$ for all $1 \leq i \leq n$. 
\end{itemize}
\end{defn}

\hspace{\parindent} Loosely speaking, a totally symmetric set is a subset of elements in a group characterized by commutativity and conjugation properties, and these properties are preserved under group homomorphisms. Thus, the image of a totally symmetric set under a homomorphism is also a totally symmetric set. An even stronger version of this statement holds: the image of a totally symmetric set under a group homomorphism must be a totally symmetric set of the same size or a singleton. This fundamental lemma, shown in section \ref{section: fundamental lemma and examples}, is a major obstruction to the possible homomorphisms between two groups. 

\hspace{\parindent} Totally symmetric sets are first defined by Kordek and Margalit in the context of the braid group \cite{kordek2019homomorphisms}. The fact that odd Artin generators of the braid group form a totally symmetric set is exploited in characterizing homomorphisms from certain braid group to other certain braid groups. The same fact has been used in the work of the authors along with Chudnovsky, where they derive a lower bound on the size of non-cyclic quotients of the braid group \cite{chudnovsky2020finite}. It has also made an appearance in the work of Caplinger and Kordek \cite{caplinger2020small}, the work of Scherich and Verberne \cite{scherich2020finite}, and the work of Chen and Mukherjea \cite{chen2020braid}. All of these results vitally depend on the properties of totally symmetric sets.

\subsection*{Overview} In this paper, we will develop and present some theory about totally symmetric sets as well as a full classification of them in several families of groups. In Section \ref{section: fundamental lemma and examples}, we give motivating examples of totally symmetric sets and present a proof for the fundamental lemma. In Section \ref{sec:specific-bounds}, we give a full classification of totally symmetric sets in a few special families of groups. In Section \ref{sec: products}, we investigate upper bounds on sizes of totally symmetric sets in various products of groups. In Section \ref{sec: stabilizer}, we give constant bounds on sizes of totally symmetric sets in groups of odd order and solvable groups through the action of the stabilizer. Finally, in Section \ref{section: corollaries}, we present a table of sizes of totally symmetric sets and derive classifications of homomorphisms. 

\subsection*{Acknowledgments} The majority of this work was completed during the summer of 2019 during the Georgia Institute of Technology Mathematics REU under the guidance of Dan Margalit and Kevin Kordek. The authors would like to thank Dan Margalit for his mentorship and guidance throughout this project, Alice Chudnovsky for useful discussions during the Georgia Tech Mathematics REU, and Santana Afton for helpful conversations on free products.
\section{Fundamental Lemma and Examples}
\label{section: fundamental lemma and examples}


\hspace{\parindent}Totally symmetric sets are a powerful aid to the study of homomorphisms due to the following fundamental lemma due to Kordek and Margalit \cite{kordek2019homomorphisms}: 

\begin{lem}[The Fundamental Lemma of Totally Symmetric Sets]
\label{lem:fundamental}
    Let $\varphi:G\xrightarrow{}H$ be a homomorphism between two groups, and $S \subset G$ a totally symmetric set of size $n$. Then $\varphi(S)$ is a totally symmetric set in $H$ with $\lvert \varphi(S) \rvert = n$ or $\lvert \varphi(S) \rvert = 1$.
\end{lem}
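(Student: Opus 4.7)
The plan is to reduce the statement to a dichotomy argument: the image automatically inherits the commuting-pairwise and conjugation-symmetric structure, so the only real content is showing that the images $\varphi(x_1), \ldots, \varphi(x_n)$ are either all distinct (size $n$) or all equal (size $1$), ruling out any intermediate collision pattern.

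First I would verify the two structural properties for $\varphi(S) = \{\varphi(x_1), \ldots, \varphi(x_n)\}$, viewed as a multiset. Since $\varphi$ is a homomorphism, $[x_i, x_j] = e$ implies $[\varphi(x_i), \varphi(x_j)] = e$, so the images commute pairwise. For conjugation-symmetry, given any $\sigma \in S_n$, pick $g \in G$ with $x_{\sigma(i)} = g x_i g^{-1}$ for all $i$; applying $\varphi$ and setting $h = \varphi(g)$ gives $\varphi(x_{\sigma(i)}) = h\, \varphi(x_i)\, h^{-1}$. So, provided the $\varphi(x_i)$ are pairwise distinct, $\varphi(S)$ is a totally symmetric set of size $n$.

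The heart of the argument is the dichotomy. Suppose $\lvert \varphi(S) \rvert < n$, so there exist indices $a \neq b$ with $\varphi(x_a) = \varphi(x_b)$. I claim this forces $\varphi(x_k) = \varphi(x_l)$ for every pair $k \neq l$. Given such a pair, choose a permutation $\sigma \in S_n$ with $\sigma(a) = k$ and $\sigma(b) = l$ (possible since $n \geq 2$), and let $g \in G$ realize it by conjugation. Then
\[
\varphi(x_k) = \varphi(g x_a g^{-1}) = \varphi(g)\,\varphi(x_a)\,\varphi(g)^{-1} = \varphi(g)\,\varphi(x_b)\,\varphi(g)^{-1} = \varphi(x_l).
\]
Hence all images agree and $\lvert \varphi(S) \rvert = 1$.

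The step I expect to require the most care in writing is the dichotomy, because one has to be explicit that the full power of total symmetry (not just commuting, and not just pairwise conjugacy) is what lets a single coincidence $\varphi(x_a) = \varphi(x_b)$ propagate to every other pair via a $2$-transitive choice of $\sigma$. Everything else is routine: the commutation and conjugation conditions pass through any homomorphism verbatim, and in the size-$n$ case the witnessing conjugator in $H$ is simply the $\varphi$-image of the witnessing conjugator in $G$.
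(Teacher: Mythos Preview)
Your proof is correct and follows essentially the same approach as the paper: verify that the commuting and conjugation properties pass through $\varphi$, then show that any single coincidence among the images forces all images to coincide. The only cosmetic difference is that the paper phrases the dichotomy via the kernel (observing $x_1x_2^{-1}\in\ker\varphi$ and then conjugating inside the normal subgroup $\ker\varphi$), whereas you apply the conjugator directly to the equality $\varphi(x_a)=\varphi(x_b)$; these are the same idea.
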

\begin{proof}
    We will first show that $\varphi(S)$ is a totally symmetric set of $H$ given that $S$ is a totally symmetric set of $G$. Let $S = \{x_1,x_2,\dots,x_n\} \subset G$. It is clear that the elements of the set $\{\varphi(x_1),\varphi(x_2),\dots,\varphi(x_n)\}$ pairwise commute, so we just need to show they also satisfy the conjugation requirement. Given $\sigma \in S_n$, let $g_\sigma \in G$ be the element inducing the permutation given by $\sigma$ on $S$, i.e. $x_{\sigma(i)} = g_\sigma x_ig_\sigma^{-1}$ for all $i$. Applying $\varphi$ gives $\varphi(x_{\sigma(i)}) = \varphi(g_\sigma)\varphi(x_i)\varphi(g_\sigma)^{-1}$. Thus, $\varphi(g_\sigma)$ achieves the same conjugation for $\varphi(S)$ as $g_\sigma$ does for $S$.
    
    \hspace{\parindent}Now for the second part of the lemma, suppose that $S = \{x_1,x_2,\dots,x_n\}$, and $\varphi(x_1) = \varphi(x_2)$. We need to show that $\varphi(x_1) = \varphi(x_i)$ for all $i$. If $\varphi(x_1) = \varphi(x_2)$, then we know that $\varphi(x_1x_2^{-1}) = e$, and $x_1x_2^{-1} \in \ker\varphi$. Since $S$ is a totally symmetric set we know that there exists an element $h\in G$ such that $hx_1h^{-1} = x_1$ and $hx_2h^{-1} = x_i$. Therefore, $x_1x_i^{-1} = (hx_1h^{-1})(hx_2^{-1}h^{-1}) = hx_1x_2^{-1}h^{-1}$. Since $\ker\varphi$ is a normal subgroup, we know that it is closed under conjugation and $x_1x_i^{-1} \in \ker\varphi$, so $\varphi(x_1) = \varphi(x_i)$. Thus if $S$ does not map injectively to a totally symmetric set of size $n$, then it must map to a set of size 1.
\end{proof}

\hspace{\parindent}An immediate corollary of this lemma is any totally symmetric set of a subgroup $H \leqslant G$ is a totally symmetric set in $G$ by the inclusion map. 

\hspace{\parindent}Totally symmetric sets or their maximum sizes can provide insight about the possible homomorphisms between two groups. Going forward, we denote the maximum size of a totally symmetric set in a group $G$ by $S(G)$. We will discuss this in much greater detail and rigor in section \ref{section: corollaries}. Before diving deep into the theory, we present two primary motivating examples to show what a totally symmetric set looks like in the wild. 

\begin{example}
    The motivating example for totally symmetric sets comes from the braid group, $B_n$. The braid group is generated by a collection of half-twists, $\sigma_1,\sigma_2,\dots,\sigma_{n-1}$, the Artin generators. Take the set of odd Artin generators, 
    \begin{align*}
        X = \{\sigma_{2i-1}\}_{i = 1}^{\lfloor{\frac{n}{2}}\rfloor}.
    \end{align*}
    One can deduce this is a totally symmetric set from the commutation relations of the braid group and the change-of-coordinates principle from mapping class group theory \cite[Section 1.3]{farb2012primer}. This example is especially useful, since the commutator subgroup of the braid group, $B'_n$, is normally generated by $\sigma_1\sigma_3^{-1}$; see \cite{lin2004braids}. From the fundamental lemma of totally symmetric sets above, we know that if $B_n$ maps to a group $G$ with $S(G) < \lfloor \frac{n}{2} \rfloor$, then the homomorphism collapses $X$ to a singleton. Thus $B'_n$ is in the kernel of the homomorphism, and the map factors through the abelianization of $B_n$, which is $\mathbb{Z}$ for all $n$.
    
    \hspace{\parindent}This fact that all homomorphisms from $B_n$ to a group $G$ with $S(G) < \lfloor \frac{n}{2} \rfloor$ are cyclic becomes a powerful tool in studying homomorphisms from the braid group. This is the driving force behind the recent results on braid groups from the authors, Chudnovsky, and Kordek \cite{chudnovsky2020finite}, Kordek and Margalit \cite{kordek2019homomorphisms} and Caplinger and Kordek \cite{caplinger2020small}.
\end{example}

\begin{example}
Another example of large totally symmetric sets comes from the symmetric group $S_n$. Take the standard homomorphism $\varphi: B_n\rightarrow S_n$ where $\sigma_i\mapsto (i \,\,\,\,\, i+1)$. From the fundamental lemma, we know that the image of the totally symmetric set in Example 1 consisting of odd Artin generators will also be a totally symmetric set. Therefore, the set of disjoint transpositions $\{(2i-1 \,\,\,\,\, 2i)\}_{i=1}^{\lfloor\frac{n}{2}\rfloor}$ is a totally symmetric set. We will later see that these two examples are rare instances of large totally symmetric sets, since in the many classifications following this section, most families of groups will have a constant upper bound on the size of their totally symmetric sets.
\end{example}

\section{Classifying Totally Symmetric Sets in Specific Groups}
\label{sec:specific-bounds}

\hspace{\parindent} In this section, we will provide upper bounds for the sizes of totally symmetric sets of free groups, dihedral groups and a subset of the Baumslag--Solitar groups. These results will naturally lead to the bounds given on general products of groups in section \ref{sec: products}.

\hspace{\parindent} Recall that one of the defining conditions of a totally symmetric set is that all of its elements pairwise commute. Thus, a natural place to start when searching for totally symmetric sets is one where all the elements commute, i.e. abelian groups.

\begin{prop}
    \label{prop:abelian}
    Let $A$ be an abelian group and $S \subset A$ a totally symmetric set, then $|S| = 1$. Hence, the only totally symmetric sets of $A$ are all the singleton subsets.
\end{prop}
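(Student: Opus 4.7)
The plan is to prove this directly from the definitions, leveraging the fact that conjugation is trivial in an abelian group. Specifically, since $A$ is abelian, for any $g, x \in A$ we have $gxg^{-1} = x$, so the conjugation half of the totally symmetric set condition becomes a rigid equality constraint rather than a meaningful symmetry.

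Concretely, I would start by writing $S = \{x_1, x_2, \dots, x_n\} \subset A$ and assuming for contradiction that $n \geq 2$. The pairwise commutativity condition is trivially satisfied. For the conjugation condition, pick any non-identity permutation $\sigma \in S_n$, say the transposition $(1\ 2)$. By the definition of a totally symmetric set, there exists $g \in A$ such that $x_{\sigma(i)} = g x_i g^{-1}$ for all $i$. But in an abelian group $g x_i g^{-1} = x_i$, so this equation forces $x_{\sigma(i)} = x_i$ for every $i$. In particular, $x_2 = x_{\sigma(1)} = x_1$, contradicting the fact that the elements of a set are distinct.

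Hence no totally symmetric set of $A$ can have size greater than one, and conversely every singleton subset $\{x\} \subset A$ is trivially a totally symmetric set. There is no real obstacle in this proof; it amounts to observing that the conjugation axiom is vacuous in the abelian setting and therefore cannot enforce anything nontrivial on a set of more than one element.
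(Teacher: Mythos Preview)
Your proof is correct and follows essentially the same approach as the paper: both arguments observe that conjugation in an abelian group is trivial, so the only permutation of $S$ achievable by conjugation is the identity, forcing $|S|=1$. Your version spells out the contradiction via an explicit transposition, but the underlying idea is identical.
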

\begin{proof}
    From the definition of a totally symmetric set, we know that any permutation of the elements of $S$ must be achieved through conjugation in $A$. Since $A$ is abelian, conjugation is always a trivial action, thus the only achievable permutation is the identity. Since all permutations must be possible, we can conclude that $|S| = 1$.
\end{proof}

\hspace{\parindent}It is perhaps disappointing to the reader to start out on such an uninteresting example, but this should motivate us to search elsewhere for interesting totally symmetric sets. At the risk of over-correcting, we can investigate the totally symmetric sets of a highly non-abelian group, the free group.

\begin{thm}
\label{thm:free-groups}
Let $F_2$ be the free group on two generators. For all $G \leqslant F_2$, $S(G) = 1$.
\end{thm}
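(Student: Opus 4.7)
The plan is to combine two classical facts about free groups: by Nielsen--Schreier, every subgroup $G \leqslant F_2$ is itself free, so it suffices to prove $S(F) = 1$ for an arbitrary free group $F$; and in a free group $F$, the centralizer $C_F(x)$ of any non-identity element $x$ is infinite cyclic, and in fact is the unique maximal cyclic subgroup containing $x$.

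Suppose, for contradiction, that $S = \{x_1, \ldots, x_n\} \subset F$ is a totally symmetric set with $n \geq 2$. Since all the $x_i$ are pairwise conjugate, either all are trivial (forcing $|S| = 1$) or all are non-trivial, and I will assume the latter. Each $x_i$ commutes with $x_1$, so $S$ lies in the maximal cyclic subgroup $M := C_F(x_1) = \langle z \rangle$, and we may write $x_i = z^{a_i}$ with $a_i \neq 0$.

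By total symmetry, there is a $g \in F$ with $gx_1g^{-1} = x_2$. Then $gMg^{-1} = C_F(x_2) = M$, so $g \in N_F(M)$. The crux is the normalizer computation $N_F(M) = M$: writing $gzg^{-1} = z^k$, a length argument (via cyclic reduction in $F$, or translation length on the Cayley tree) forces $k = \pm 1$. The case $k = -1$ is eliminated by observing that then $g^2 \in C_F(z) = \langle z \rangle$, say $g^2 = z^m$; conjugating this identity by $g$ yields $z^m = z^{-m}$, and torsion-freeness of $F$ forces $m = 0$, hence $g = e$, contradicting $gzg^{-1} \neq z$. So $g \in M$, meaning $g$ commutes with $x_1$, giving $x_2 = gx_1g^{-1} = x_1$, contradicting $n \geq 2$.

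The main obstacle is the normalizer computation $N_F(M) = M$: it packages two distinctive features of free groups (torsion-freeness and the absence of any non-identity element conjugate to its own inverse) which together preclude any non-trivial totally symmetric set. The rest is bookkeeping with Nielsen--Schreier and the cyclic-centralizer fact.
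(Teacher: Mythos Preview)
Your argument is correct. The overall architecture matches the paper's proof---reduce to a free group, use that commuting nontrivial elements lie in a common cyclic subgroup $\langle z\rangle$, and then rule out the possibility of conjugating $z$ to $z^{-1}$ by showing the conjugator would have to lie in $\langle z\rangle$ already---but the way you execute the middle step is more structural.

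The paper exploits the \emph{transposition} in the totally symmetric set: it takes $h$ with $hx^nh^{-1}=x^m$ and $hx^mh^{-1}=x^n$, raises to powers to obtain $x^{n^2}=x^{m^2}$, and concludes $n=\pm m$. You instead use only a one-sided conjugation $gx_1g^{-1}=x_2$, observe that $g$ must normalize the maximal cyclic subgroup $M=C_F(x_1)$ (since $C_F(x_2)=M$ as well), and then invoke the normalizer computation $N_F(M)=M$ via translation length. Your endgame for eliminating $gzg^{-1}=z^{-1}$ (conjugating $g^2=z^m$ by $g$ to force $m=0$) is slightly cleaner than the paper's, which argues that $h^2=y^a$ for a primitive $y$ forces $a$ even and $h=y^{a/2}$.

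What each approach buys: the paper's exponentiation trick is self-contained and avoids citing the translation-length or cyclically-reduced-length machinery; your normalizer formulation isolates the key fact $N_F(\langle z\rangle)=\langle z\rangle$ for primitive $z$, which is reusable and makes transparent exactly which features of free groups (torsion-freeness, no element conjugate to its inverse) are doing the work. One small remark: your sentence ``a length argument \ldots\ forces $k=\pm 1$'' is correct but terse; the precise statement is that translation length satisfies $\tau(z^k)=|k|\,\tau(z)$ and is a conjugacy invariant, so $\tau(z)=\tau(gzg^{-1})=\tau(z^k)=|k|\,\tau(z)$ with $\tau(z)>0$.
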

\begin{proof}
    To prove this theorem, we just need to show that $S(F_2) = 1$, that is, $F_2$ only has trivial totally symmetric sets. 
    
    First, we will check when any two elements in $F_2$ commute. If two elements $a, b \in F_2$ commute, we know that they must generate an abelian subgroup. The Nielsen–Schreier theorem states that any subgroup of a free group is free, and the only abelian free group is $F_1 \cong \mathbb{Z}$. This implies that both of these elements are powers of the generator of $\mathbb{Z}$. If $\mathbb{Z} \cong \langle x \rangle$, then $a = x^n$ and $b = x^m$.
    
    \hspace{\parindent}For $a$ and $b$ to be members of a totally symmetric set, there must exist an element $h \in F_2$ such that $hx^nh^{-1} = x^m$ and $hx^mh^{-1} = x^n$. Take the first of these equalities and raise it to the $m$th power, to obtain the following: 
    \begin{align*}
        (hx^nh^{-1})^m = (x^m)^m = x^{m^2}.
    \end{align*}
    Using our second conjugation equality, we can manipulate the above expression to see that:
    \begin{align*}
        (hx^nh^{-1})^m = hx^{n\cdot m}h^{-1} = (hx^mh^{-1})^n = (x^n)^n = x^{n^2}.
    \end{align*}
    Thus, $x^{n^2} = x^{m^2}$ and $n = \pm m$. This reduces our problem to two distinct cases.
    
    \hspace{\parindent}If $n = m$, then these are the same element and our totally symmetric set consists of a singleton. On the other hand, if $n = -m$, then $hx^nh^{-1} = x^{-n}$ and $hx^{-n}h^{-1} = x^n$. Combining these equalities shows us that $h^2x^nh^{-2} = x^n$, implying $h^2$ commutes with $x^n$ and therefore $h^2$ and $x^n$ can both be expressed as some element $y$ to a power.
    
    \hspace{\parindent}Suppose that $h^2 = y^a$ and $x^n = y^b$. Moreover, we can assume that $y$ is chosen to be minimal in the sense that it cannot be written as another element $z^c$, where $c > 1$. This follows from the residual finiteness of the free groups. Hence, $a$ must be even and $h = y^{a/2}$. Therefore, $h$ commutes with $x^n$, and the equation $hx^nh^{-1} = x^{-n}$ reduces to $x^n = x^{-n}$. This implies that $n = 0$, and thus $F_2$ has trivial totally symmetric sets. 
\end{proof}

\hspace{\parindent}Note since every free group on a finite or countably infinite number of generators is a subgroup of $F_2$, it follows that all free groups have trivial totally symmetric sets. Another consequence is that totally symmetric sets under quotients will not be well-behaved, as every group is a quotient of some free group. 

\hspace{\parindent}From these first two examples, we can make an observation: The two properties that define a totally symmetric set are inherently at odds with each other. On the one hand, we want to find subsets of elements that all pairwise commute, so groups that are ``more abelian" seem to be the natural place to find large sets of this nature. Yet we also need conjugation to be efficacious in our group in order to achieve the full set of permutations, but conjugation is trivial when elements commute. So if we want to find examples of large totally symmetric sets, we need to find groups that intuitively achieve this abelian, non-abelian balance that large totally symmetric sets would require.

\hspace{\parindent}After seeing two important classes of groups that both have trivial totally symmetric sets, we now present our first example of a family of groups exhibiting non-trivial totally symmetric sets.

\begin{thm}
    The maximal size of a totally symmetric subset of the dihedral group $D_{2n}$ is 2 for all $n \geq 3$. Furthermore, every totally symmetric set of size 2 must take the form $\{r^i,r^{-i}\}$ or $\{sr^i,sr^{i+\frac{n}{2}}\}$ for $0 \leq i \leq n-1$, with the latter only occurring if $n$ is divisible by 4.
\end{thm}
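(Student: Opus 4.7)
The plan is to argue directly from the presentation $D_{2n} = \langle r, s \mid r^n = s^2 = 1,\ srs = r^{-1}\rangle$, splitting any totally symmetric set $T \subseteq D_{2n}$ into three cases according to whether $T$ consists of only rotations $r^i$, only reflections $sr^i$, or a mixture of the two. Before doing the cases, I would record the relevant commutation and conjugation data: two rotations always commute; $r^a$ and $sr^b$ commute iff $n \mid 2a$; and $sr^a, sr^b$ commute iff $n \mid 2(a-b)$. On the conjugation side, $r^k$ fixes every rotation and sends $sr^i \mapsto sr^{i-2k}$, while $sr^k$ sends $r^i \mapsto r^{-i}$ and $sr^i \mapsto sr^{2k-i}$.

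In the rotations-only case, every conjugation either fixes each element of $T$ or inverts all of them, so at most two permutations of $T$ are realized. For $|T| \geq 3$, the existence of any transposition would force $r^{a_j} = r^{-a_j}$ for each $j$, pinning all $a_j$ to $\{0, n/2\}$ and contradicting $|T| \geq 3$. For $|T| = 2$, the nontrivial swap forces $\{r^{a_1}, r^{a_2}\} = \{r^{-a_1}, r^{-a_2}\}$ with $a_1 \not\equiv a_2 \pmod n$, so $T = \{r^i, r^{-i}\}$. In the reflections-only case, pairwise commutation forces all indices into a single class modulo $n/2$, so $n$ must be even and $|T| \leq 2$, with $T = \{sr^i, sr^{i+n/2}\}$; realizing the swap by conjugation amounts to solving $2k \equiv n/2 \pmod n$ (up to a shift by $2i$), which admits a solution exactly when $n/2$ is even, i.e.\ $4 \mid n$. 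In the mixed case, any nontrivial rotation in $T$ commuting with a reflection must equal $r^{n/2}$, but $r^{n/2}$ is central in $D_{2n}$ and therefore fixed by every conjugation, so it cannot be swapped with a reflection; this rules out mixed sets of size $\geq 2$.

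The main obstacle I anticipate is the rotations-only case with $|T| \geq 3$: conjugation supplies only two achievable permutations (identity and global inversion), and one must use this sparsity to rule out even a single transposition unless every element of $T$ has order dividing $2$ in $\langle r \rangle$. The $4 \mid n$ condition in the reflections case is then a short but delicate modular computation, and the mixed case reduces immediately to the centrality of $r^{n/2}$. Stitching the three cases together gives both the bound $|T| \leq 2$ and the stated classification of the size-$2$ sets.
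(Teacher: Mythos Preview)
Your proposal is correct and follows essentially the same approach as the paper: work from the standard presentation, record the commutation and conjugation rules, and split according to whether the elements are rotations or reflections. The paper handles the mixed case more economically by observing that conjugation preserves the $s$-exponent (rotations and reflections are never conjugate), whereas you route through the centrality of $r^{n/2}$; and your rotations-only $|T|\geq 3$ argument can be shortened to the observation that each rotation's conjugacy class is $\{r^i,r^{-i}\}$, but these are presentational differences rather than a different strategy.
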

\begin{proof}
    We will use the following presentation for the proof: $D_{2n} = \langle r,s\,|\,r^n = 1, s^2 = 1, srs = r^{-1}\rangle$. Hence, any element of $D_{2n}$ can be written as $s^\epsilon r^{i}$ where $\epsilon = 0,1$ and $0 \leq i \leq n-1$. We first check the necessary conditions for any two elements of the group to commute with each other. Any $r^i$ and $r^j$ commute by definition, but for elements $sr^i$ and $sr^j$, we use the following implications to show that if they commute, $i=j$ or $j=i+\frac{n}{2}$:
    \begin{align*}
        sr^isr^j = sr^jsr^i \implies s^2r^{-i}r^j = s^2r^{-j}r^{i} \implies r^{j-i} = r^{i-j}.
    \end{align*}
    Thus, $j - i \equiv i - j \text{ mod n}$, or $2(j - i) \equiv 0 \text{ mod n}$. Either $i = j$, or $j - i = \frac{n}{2} \implies j = i + \frac{n}{2}$, which can only occur when $n$ is even. Therefore, the only nontrivial case of two elements of this form commuting is $sr^i$ and $sr^{i+\frac{n}{2}}$. From the definition of conjugation and the fact that $s^2 = 1$, two conjugate elements must have the same exponents on $s$. Hence, we do not consider any further cases and the two cases discussed above are disjoint from each other. 
    
    \hspace{\parindent}We will now check the conditions necessary for conjugation. The following shows two elements of the form $r^i$ and $r^j$ are conjugate if and only if they are inverses:
    \begin{align*}
        sr^kr^i = r^jsr^k \implies sr^{k+i} = sr^{k-j} \implies k + i = k - j \implies i = -j.
    \end{align*}
    This gives the first totally symmetric set $\{r^i,r^{-i}\}$.  For our other candidates, elements of the form $sr^i$ and $sr^{i + \frac{n}{2}}$ are conjugate if and only if $i = j+\frac{n}{4}$.
    \begin{align*}
        sr^jsr^i = sr^{i+\frac{n}{2}}sr^j \implies s^2r^{i - j} = s^2r^{j - i - \frac{n}{2}} \implies i - j = j - i - \frac{n}{2} \implies 2(i - j) \equiv \frac{n}{2} \text{ mod n }.
    \end{align*}
    This implies that $i - j = \frac{n}{4}$ or $i = j + \frac{n}{4}$, which can only occur if $n$ is divisible by $4$. If two elements of this form are conjugate, there are no other elements in their conjugancy class, as $sr^{(i + \frac{n}{2}) + \frac{n}{2}} = sr^{i}$. We have thus obtained a full classification of the totally symmetric sets of $D_{2n}$, and $S(D_{2n}) = 2$.
    
    \hspace{\parindent}Note: This proof extends to the infinite dihedral group as well $D_\infty = \mathbb{Z} \ltimes \mathbb{Z}_2$, showing that its totally symmetric sets also have a maximal size of 2.
\end{proof}

\hspace{\parindent} The classification in the above proof relies on the convenient presentation of the dihedral group. While this direct proof approach will not work in general, there is another class of groups where we can explicitly compute totally symmetric sets from the group presentation: Baumslag--Solitar groups. For nonzero integers $m,n$, the Baumslag--Solitar group $BS(m,n)$ is defined by the following presentation:
$\langle a,b\,|\,ba^mb^{-1}=a^n \rangle$.
\begin{thm}
    The maximal size of a totally symmetric subset of the Baumslag--Solitar group $BS(1,n)$ is 1 when $n \neq -1$ and 2 when $n = -1$.
\end{thm}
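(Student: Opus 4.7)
The plan is to model $BS(1,n)$ as the metabelian semidirect product $\Z[1/n] \rtimes \Z$, where $\Z[1/n]$ is the subring of $\mathbb{Q}$ generated by $1/n$ and the right factor acts on it by multiplication by $n$. Under this identification, $a$ corresponds to $(1, 0)$, $b$ to $(0, 1)$, and a short computation gives
\begin{align*}
    (y_1, k_1)(y_2, k_2) &= \bigl(y_1 + n^{k_1} y_2,\; k_1 + k_2\bigr),\\
    (x, j)(y, k)(x, j)^{-1} &= \bigl(x(1 - n^k) + n^j y,\; k\bigr).
\end{align*}
Once these formulas are in place, the classification becomes purely arithmetic.

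First, I would observe that conjugation preserves the second coordinate, so every totally symmetric set of size at least $2$ lies in a single ``horizontal slice'' $\Z[1/n] \times \{k\}$. Two elements $(y_1, k)$ and $(y_2, k)$ commute exactly when $(y_1 - y_2)(1 - n^k) = 0$, which, because $\Z[1/n]$ is an integral domain, forces $y_1 = y_2$ or $n^k = 1$. On a slice with $n^k = 1$ the conjugation formula collapses to $(y, k) \mapsto (n^j y, k)$, so transposing $(y_1, k)$ and $(y_2, k)$ requires $n^{2j} = 1$ together with $n^j y_1 = y_2$.

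Next I would run the case analysis. If $n = 1$ the group is $\Z^2$ and Proposition~\ref{prop:abelian} applies. If $n \neq \pm 1$, then $n^k = 1$ and $n^{2j} = 1$ both force $k = j = 0$, so any candidate pair collapses to $y_1 = y_2$ and $S(BS(1,n)) = 1$. If $n = -1$, then $n^k = 1$ holds for every even $k$ and $n^{2j} = 1$ for every $j$, so the transposition condition reduces to $y_2 = -y_1$; this produces the genuine totally symmetric set $\{a, a^{-1}\}$, swapped by $b$, showing $S(BS(1,-1)) \geq 2$.

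Finally, to close the case $n = -1$ I would rule out totally symmetric sets of size $3$: given three distinct elements $(y_1, k), (y_2, k), (y_3, k)$ with $k$ even, swapping $(y_1, k)$ and $(y_2, k)$ while fixing $(y_3, k)$ demands an odd $j$ with $n^j y_3 = y_3$, forcing $y_3 = 0$; the analogous demand on a second transposition forces another $y_i = 0$, contradicting distinctness. The main obstacle is really just bookkeeping the case split, and the crucial arithmetic fact singling out $n = -1$ is that $n^{2j} = 1$ for every $j \in \Z$, which is precisely what enables a nontrivial symmetric pair.
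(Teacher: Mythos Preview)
Your proposal is correct and follows essentially the same strategy as the paper: put every element into a normal form, read off the second-coordinate invariance under conjugation, reduce the commuting condition to an arithmetic identity, and then split on whether $n^k=1$. Your use of the semidirect-product model $\Z[1/n]\rtimes\Z$ packages these computations more cleanly than the paper's direct manipulation of words $a^ib^j$ (and in particular your integral-domain observation replaces the paper's somewhat awkward ``denominator zero'' case split), and your fixed-point argument for ruling out size~$3$ when $n=-1$ is slightly different from, but equivalent to, the paper's pairwise-constraint argument.
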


\begin{proof}
    The group $BS(1,n)$ has the one-relation presentation $\langle a,b \,|\, bab^{-1} = a^n \rangle$. This relation is equivalent to $ba = a^nb$, which allows us to write any element of this group in the form $a^ib^j$. We can use this fact to check necessary conditions for different elements of the group to commute and be conjugates. If elements $a^ib^j$ and $a^xb^y$ commute, we have the following implication:
    \begin{align*}
        a^ib^ja^xb^y = a^xb^ya^ib^j \implies a^ia^{xn^j}b^jb^y = a^xa^{in^y}b^jb^y.
    \end{align*}
    This gives the condition that $i + xn^j = x + in^y$. We couple this with necessary conditions for conjugation. If elements $a^ib^j$ and $a^xb^y$ are conjugated to each other by $a^{x_0}b^{y_0}$, we have that
    \begin{align*}
        a^{x_0}b^{y_0}a^ib^j = a^xb^ya^{x_0}b^{y_0} \implies a^{x_0}a^{in^{y_0}}b^{y_0}b^j = a^xa^{x_0n^y}b^yb^{y_0}.
    \end{align*}
    Hence, conjugation give us two conditions: $x_0 + in^{y_0} = x + x_0n^y$ and $y_0 + j = y + y_0 \implies j = y$. Using the latter condition, we can combine this with the conditions for commutativity to obtain the following statement: elements $a^xb^y$ and $a^ib^y$ (as $j = y$) will commute and be conjugate when $i + xn^y = x + in^y$. Rearranging and solving for $i$, we obtain an expression for $i$ in terms of $x,y,n$:
    \begin{equation*}
        i = \frac{x(1-n^y)}{1-n^y}.
    \end{equation*}
    This implies that $i = x$, which makes the two elements the same, except when the denominator is 0. For the denominator to be 0, we have three cases to check. It must either be the case that $y = 0$, $y \neq 0$ and $n = 1$, or $y = 2m$ for some integer $m$ and $n = -1$. 
    
    \hspace{\parindent}If $y = 0$, we only need to check when two elements $a^x$ and $a^y$ are conjugate. By definition of a totally symmetric set there exists an element $h$ swapping $a^x$ and $a^y$ through conjugation: $ha^xh^{-1} = a^y$ and $ha^yh^{-1} = a^x$. We derive conditions on the exponents by raising the former equality to the power of $y$:
    \begin{align*}
        (ha^xh^{-1})^y = a^{y^y} \implies ha^{xy}h^{-1} = a^{y^2} \implies (ha^{y}h^{-1})^x = a^{y^2} \implies (a^{x})^x = a^{y^2}.
    \end{align*}
    So $a^{x^2} = a^{y^2}$ and $x = \pm y$. If $x = y$, then the two elements are the same. Otherwise, if $x = -y$, we instead have $ha^xh^{-1} = a^{-x}$. As previously discussed, we can represent our general element $h$ as the element $b^ia^j$. Plugging this in gives $b^ia^xb^{-i} = a^{-x}$, which becomes $a^{xn^i} = a^{-x}$, so $x(1 + n^{i}) = 0$. Either $x = 0$ or $n^i = -1$. The former case is trivial and the latter can only occur when $n = -1$ and $i$ is an odd integer. Thus, we can examine this case by assuming $n = -1$ and writing the conjugating element as $b^{2m+1}a^j$. This gives $b^{2m+1}a^xb^{-(2m+1)} = a^{-x}$. Using the conjugation relation, we can see that conjugating $a^x$ an odd number of times by $b$ gives $a^{-x}$, so the equation is always true. Hence, when $n = -1$, two elements of the form $a^x$, $a^{-x}$ form a totally symmetric set of size 2.
    
    \hspace{\parindent}The other case to consider is when $y \neq 0$. This breaks down into two subcases, the first of which is $n = 1$. When $n = 1$, the relation for $BS(1,n)$ becomes the commuting relation for $a$ and $b$, implying that $BS(1,1)$ is abelian and thus only has trivial totally symmetric sets. The other case is when $n = -1$ and $y = 2m$ for $m \in \mathbb{Z}$. Thus, we need to check when two elements of the form $a^xb^{2m}$ and $a^yb^{2m}$ commute and are conjugate. Using our general commuting condition from above, we substitute $2m$ for $j$ and $y: i + x(-1)^{2m} = x + i(-1)^{2m}$. This reduces to $i + x = x + i$, which is always true.
    
    \hspace{\parindent}We now must check that for any two elements $a^xb^{2m}$ and $a^yb^{2m}$, there exists an element $h$ that swaps them through conjugation, i.e. $ha^xb^{2m}h^{-1} = a^yb^{2m}$ and $ha^yb^{2m}h^{-1} = a^xb^{2m}$. As above, we raise the former equation to $y$ and simplify:
    \begin{equation*}
        (ha^xb^{2m}h^{-1})^y = (a^yb^{2m})^y \implies ha^{xy}b^{2my}h^{-1} = a^{y^2}b^{2my} \implies (ha^{y}b^{2m}h^{-1})^x = a^{y^2} \implies (a^x)^x = a^{y^2}.
    \end{equation*}
    Therefore, $x^2 = y^2$ and $x = \pm y$. Again, if $x = y$, then the elements are equal, otherwise if $x = -y$, we can write $h$ as $b^ia^j$ to obtain the following: 
    \begin{align*}
        b^ia^ja^xb^{2m}a^{-j}b^{-i} &= a^{-x}b^{2m} \\
        a^{(x+j)(-1)^i}b^{2m+ i}a^{-j}b^{-i} &= a^{-x}b^{2m} \\
        a^{(x+j)(-1)^i + (-j)(-1)^{2m+i}}b^{2m} &= a^{-x}b^{2m} \\
        a^{x(-1)^i}b^{2m} &= a^{-x}b^{2m}.
    \end{align*}
    
    \hspace{\parindent}Thus, the elements $a^xb^{2m}$ and $a^{-x}b^{2m}$ form a totally symmetric set. Any other element in such a totally symmetric set would have to satisfy the above conditions with $a^xb^{2m}$ and $a^{-x}b^{2m}$, which is impossible for an element not equal to either.
\end{proof}

\hspace{\parindent} For the groups above, we are able to directly compute the totally symmetric sets by exploiting the existence of their normal forms. In general, we want to be able to compute the totally symmetric sets of groups without having to rely on assumptions such as the existence of normal forms or explicit descriptions of centralizers and conjugacy classes. For many groups, such a direct computation will not be possible, so we need to introduce more powerful techniques.
\section{Totally Symmetric  Sets in Products of Groups}
\label{sec: products}

\hspace{\parindent} A natural question that arises in studying totally symmetric sets is how they behave under various products, such as direct products, free products, and semi-direct products. To this end, we will prove the following theorem:

\begin{thm}
    \label{ref:product-theorem}
    Let $G$ and $H$ be groups.
    \begin{align*}
        S(G \times H) = \max\{S(G),S(H)\} \\
        S(G * H) = \max\{S(G),S(H)\}
    \end{align*}
\end{thm}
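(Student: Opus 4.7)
The plan is to handle each product separately, with each equality splitting into a (routine) lower bound and a (substantive) upper bound. For both products the lower bound follows from the corollary of Lemma~\ref{lem:fundamental}: the factor inclusions $G \hookrightarrow G \times H$ (via $g \mapsto (g, e)$) and $G \hookrightarrow G * H$ are injective homomorphisms, so any totally symmetric set of a factor transfers to a totally symmetric set of the same size in the product, giving $S(G \times H) \geq \max\{S(G), S(H)\}$ and $S(G * H) \geq \max\{S(G), S(H)\}$.

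For the direct product upper bound, I would take $T = \{(g_1, h_1), \dots, (g_n, h_n)\} \subset G \times H$ totally symmetric and examine its two coordinate projections. The key observation is that the conjugator $(a, b)$ realizing $\sigma \in S_n$ on $T$ acts coordinatewise, so $a g_i a^{-1} = g_{\sigma(i)}$; in particular $g_i = g_j$ implies $g_{\sigma(i)} = g_{\sigma(j)}$, and every $\sigma$ preserves the partition of $\{1, \dots, n\}$ by equality of first coordinate. The only partitions preserved by the full symmetric group are the trivial and discrete ones, so the $g_i$ are either all equal or all distinct, and symmetrically for the $h_i$. Running through the four resulting cases, either $n = 1$ (both projections constant) or one projection is itself a totally symmetric set of $G$ or $H$ of size $n$, because it inherits pairwise commutativity from $T$ and the corresponding coordinate of the conjugator realizes $\sigma$. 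This yields $n \leq \max\{S(G), S(H)\}$.

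For the free product upper bound, I would use the Kurosh subgroup theorem (or, equivalently, the action of $G * H$ on the standard Bass--Serre tree, whose edge stabilizers are trivial and whose vertex stabilizers are the conjugates of $G$ and $H$) to classify abelian subgroups of $G * H$: each is either infinite cyclic or conjugate into one of the factors. Since the subgroup $\langle T \rangle$ is abelian, one of these cases applies. If $\langle T \rangle$ is conjugate into $G$, say $y T y^{-1} \subset G$, I would argue that the realizing conjugators for the totally symmetric set $y T y^{-1}$ (which is still totally symmetric in $G * H$) can be chosen inside $G$: assuming some $x_i \neq e$, any $z \in G * H$ that sends a nontrivial element of $G$ to an element of $G$ via conjugation must fix the vertex $v_G$ of the tree, because otherwise the conjugated element would stabilize the nontrivial edge path between $v_G$ and $z \cdot v_G$ and edge stabilizers are trivial. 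Hence $y T y^{-1}$ is totally symmetric in $G$ and $n \leq S(G)$, and the $H$ case is symmetric.

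The main obstacle is the remaining case in which $\langle T \rangle$ is an infinite cyclic subgroup $\langle w \rangle$ generated by a hyperbolic $w$ (that is, $w$ not conjugate into a factor). Here I would mimic the free-group calculation of Theorem~\ref{thm:free-groups}: writing $T = \{w^{a_1}, \dots, w^{a_n}\}$ and using a conjugator $y$ that swaps $w^{a_1}$ and $w^{a_2}$, the identity $(y w^{a_1} y^{-1})^{a_2} = (y w^{a_2} y^{-1})^{a_1}$ forces $a_1^2 = a_2^2$, and the sign case $a_1 = -a_2$ requires $y$ to invert the axis of $w$ in the Bass--Serre tree. To conclude $n = 1$ here, one needs a further tree-action analysis combining cyclicity of centralizers of hyperbolic elements with triviality of edge stabilizers, and this axis-inversion step is where I expect the heaviest technical work to lie.
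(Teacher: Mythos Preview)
Your direct-product argument is essentially the paper's: both establish that each coordinate projection of a totally symmetric set is either constant or injective, and then project to a factor. Your partition formulation (the blocks of ``same first coordinate'' must be permuted by every $\sigma \in S_n$, hence the partition is trivial or discrete) is a bit slicker than the paper's ad hoc contradiction in Lemma~\ref{lem:direct-product}, but the content is identical.

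For the free product, your Bass--Serre setup is a natural alternative to the paper's normal-form combinatorics, and your case split (abelian subgroup conjugate into a factor versus infinite cyclic with hyperbolic generator) matches the paper's. However, the obstacle you flag in the hyperbolic case is not merely technical---it is fatal, because the free-product half of the theorem is \emph{false} as stated. Take $G = \Z_2 = \langle a \rangle$ and $H = \Z_3 = \langle b \rangle$, so that $\max\{S(G), S(H)\} = 1$, and set $v = abab^2 \in G * H$. This $v$ is cyclically reduced of length $4$, hence of infinite order and not conjugate into either factor, and
\[
a\, v\, a^{-1} \;=\; a(abab^2)a \;=\; bab^2 a \;=\; v^{-1},
\]
so $\{v, v^{-1}\}$ is a totally symmetric set of size $2$ in $\Z_2 * \Z_3$. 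In your tree picture this is exactly the axis inversion you anticipated: the involution $a$ fixes the base $G$-vertex on the axis of $v$ and swaps the two incident edges of that axis. The paper's argument breaks at the same point: from $w v^{i_1} w^{-1} = v^{-i_1}$ it correctly deduces that $w^2$ centralizes $v^{i_1}$, hence $w^2 = z^b$ for a primitive $z$, but the subsequent word-length claim that $w = z^{b/2}$ collapses when $w^2 = e$ (here $w = a$, $b = 0$, and $a$ is certainly not a power of the infinite-order $z$). Both your outline and the paper's proof go through under the additional hypothesis that $G$ and $H$ are torsion-free, since then $G * H$ has no involutions and no hyperbolic axis can be inverted; without that hypothesis the claimed equality $S(G*H)=\max\{S(G),S(H)\}$ does not hold.
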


\hspace{\parindent}The next step after understanding how totally symmetric sets behave under direct and free products is to explore semi-direct products or other non-trivial group extensions. While we will not classify all totally symmetric sets in semi-direct products, we will generalize our results on dihedral groups from the previous section.

\begin{prop}
\label{prop:semi-cyclic}
    For $p$ prime and $m$ any integer where $p|m$, $S(\mathbb{Z}_p \ltimes \mathbb{Z}_m) = 2$.
\end{prop}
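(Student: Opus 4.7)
The plan is two-step: show $S(\mathbb{Z}_p \ltimes \mathbb{Z}_m) \leq 2$ and then exhibit a totally symmetric set of size $2$.

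For the upper bound, the first move is to apply the Fundamental Lemma to the quotient $\pi : \mathbb{Z}_p \ltimes \mathbb{Z}_m \twoheadrightarrow \mathbb{Z}_p$. Since $\mathbb{Z}_p$ is abelian, Proposition~\ref{prop:abelian} forces the image of any totally symmetric set to be a singleton, so every totally symmetric set lies in a single coset of the normal subgroup $\mathbb{Z}_m$. Writing elements in the normal form $r^a t^b$ (with $r$ generating $\mathbb{Z}_p$, $t$ generating $\mathbb{Z}_m$, and $rtr^{-1} = t^\alpha$ for $\alpha$ of order dividing $p$ in $(\mathbb{Z}/m)^*$), every such set consists of elements $r^a t^{b_i}$ with a single fixed $a$. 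If $a = 0$ the set lies in $\langle t \rangle$, and conjugation acts on its elements through the cyclic subgroup $\langle \alpha \rangle$; because $S_n$ is non-cyclic for $n \geq 3$, the set has size at most $2$. If $a \neq 0$, a direct computation in the normal form shows that conjugation by $r^A t^C$ induces the affine map $b \mapsto \alpha^A b + K$ on the translation parts, with $K \in (1-\alpha)\mathbb{Z}_m$; requiring this affine map to realize a transposition $(1\,2)$ that fixes a third element forces $\alpha^A \equiv -1 \pmod m$. Comparing the order of $\alpha^A$ (which divides the prime $p$) with the order of $-1$ in $(\mathbb{Z}/m)^*$ then excludes the case $n \geq 3$ and yields the bound.

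For the lower bound, I would exhibit an explicit totally symmetric set of size $2$. The dihedral pair $\{t^b, t^{-b}\}$ swapped by $r$ is the template from the previous section when $p = 2$ and $\alpha = -1$. For general $p$ the hypothesis $p \mid m$ provides the element $t^{m/p}$ of order $p$ inside $\mathbb{Z}_m$; the natural candidate pairs are built around this distinguished element and its $\langle r \rangle$-conjugates, and the commutation and swap conditions are checked using the affine formula derived above.

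The main obstacle will be producing the pair when $p$ is odd: conjugation by $r$ cycles $\langle t \rangle$ in an orbit of length $p$ rather than a $2$-cycle, so a genuine swap of two chosen elements requires combining a nontrivial power of $r$ with a translation by a carefully chosen element of $\mathbb{Z}_m$. The hypothesis $p \mid m$ is exactly what makes such a composite conjugator available, and verifying that it restricts to an involution on the chosen pair (even though it need not be an involution in $G$) is the delicate technical step.
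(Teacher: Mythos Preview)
Your upper-bound strategy---project to the abelian quotient via the Fundamental Lemma, then analyze each coset through the induced affine action---is genuinely different from the paper's approach, which grinds through the normal form $r^as^b$ to extract explicit congruences for commutation and conjugacy and then solves them case by case, ultimately invoking Lemma~\ref{lem:inverses} to cap the size at $2$. One discrepancy to flag: the paper's presentation $\langle s,r\mid r^p=e,\ s^m=e,\ srs^{-1}=r^k\rangle$ makes $\mathbb{Z}_p$ the \emph{normal} factor with $\mathbb{Z}_m$ acting, opposite to the reading you adopt (where $\mathbb{Z}_m$ is normal and $\mathbb{Z}_p$ acts by $t\mapsto t^\alpha$). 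These are different groups in general, so you should reconcile conventions before proceeding.

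There are two genuine gaps. In the $a\neq 0$ case, the claim that realizing the transposition $(1\,2)$ fixing a third element forces $\alpha^A\equiv -1\pmod m$ is too strong: subtracting the two swap conditions yields only $(\alpha^A+1)(b_1-b_2)\equiv 0\pmod m$, so the order comparison must be carried out modulo the annihilator of $b_1-b_2$, and the case $p=2$ does not fall to this argument at all. You have also not used the commutation condition among the $r^at^{b_i}$, which imposes $(\alpha^{-a}-1)(b_i-b_j)\equiv 0$ and is where much of the actual constraint lives. More seriously, the lower bound is never established, and your concern about it is well founded rather than merely ``delicate'': in your setup, conjugation acts on $\langle t\rangle$ through the cyclic group $\langle\alpha\rangle\leq(\mathbb{Z}/m)^*$ of order dividing $p$, which contains no element of order $2$ when $p$ is odd, so no conjugation swaps two distinct elements of $\langle t\rangle$; and if moreover $m$ is odd then $|G|=pm$ is odd and Proposition~\ref{prop:odd-order} forces $S(G)=1$ outright. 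The hypothesis $p\mid m$ alone does not manufacture a size-$2$ totally symmetric set---some condition like $-1$ lying in the image of the action is needed, and the paper's own argument does not explicitly verify this either.
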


\subsection{Direct and Free Products}
 



While we may expect that taking the direct product of two groups provides a way of creating larger totally symmetric sets, we begin by showing that direct products do not create larger totally symmetric sets.

\begin{lem}
\label{lem:direct-product}
Let $G$ and $H$ both be groups and $S$ a subset of $G \times H$. If there exists elements $(x_1,y_1),(x_1,y_2),(x_{2},y) \in S$, with $x_1 \neq x_2$, $y_1 \neq y_2$, and $y$ any element in $H$, then $S$ cannot be a totally symmetric subset of $G \times H$.
\end{lem}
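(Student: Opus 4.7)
The plan is to obtain a contradiction by selecting a specific permutation of $S$ and analyzing what a conjugating element would have to do in the first coordinate. Assume for contradiction that $S$ is a totally symmetric subset of $G \times H$ containing the three distinct elements $(x_1,y_1), (x_1,y_2), (x_2,y)$ with $x_1 \ne x_2$ and $y_1 \ne y_2$. The key observation is that the elements $(x_1,y_1)$ and $(x_1,y_2)$ agree in their first coordinate, while $(x_2,y)$ disagrees. Any conjugation in $G \times H$ by an element $(g,h)$ acts diagonally, so the effect on first coordinates is just conjugation by $g$ in $G$.

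Next I would invoke the total symmetry hypothesis with a permutation $\sigma \in S_{|S|}$ that swaps $(x_1, y_1)$ and $(x_2, y)$ while fixing $(x_1, y_2)$ (and acts however necessary on the other elements of $S$, which we can take to be arbitrary). By hypothesis there exists a conjugator $(g,h) \in G \times H$ achieving $\sigma$. Projecting onto the first coordinate yields
\begin{align*}
    g x_1 g^{-1} &= x_2 \quad \text{(from } (x_1,y_1)\mapsto(x_2,y)\text{)}, \\
    g x_1 g^{-1} &= x_1 \quad \text{(from } (x_1,y_2)\mapsto(x_1,y_2)\text{)}.
\end{align*}
Combining these equations forces $x_1 = x_2$, contradicting our assumption.

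The role of $y_1 \ne y_2$ is simply to ensure that $(x_1,y_1)$ and $(x_1,y_2)$ are genuinely distinct elements of $S$, so that the permutation described above is well-defined on three distinct elements; the specific value of $y$ is never used, which matches the statement allowing $y$ to be arbitrary. I do not anticipate any obstacle here: the argument reduces to a one-line contradiction once the right permutation is chosen, and the main subtlety is just recognizing that the correct permutation is one that distinguishes the two elements sharing the $x_1$ first coordinate, rather than the more obvious swap of the two elements with matching first coordinates (which would give no constraint on $g$ at all).
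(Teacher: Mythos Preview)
Your proof is correct and follows essentially the same approach as the paper: pick a permutation of $S$ that treats the two $x_1$-coordinate elements differently, project the conjugation to the first factor, and obtain the contradiction $gx_1g^{-1}=x_1$ and $gx_1g^{-1}=x_2$. The only cosmetic difference is the particular permutation chosen---the paper uses the one sending $(x_1,y_1)\mapsto(x_1,y_2)\mapsto(x_2,y)$, while you use the transposition swapping $(x_1,y_1)$ with $(x_2,y)$ and fixing $(x_1,y_2)$---but both yield the same pair of incompatible equations.
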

\begin{proof}
    Suppose that $S$ is a totally symmetric subset of $G$, and it contains elements of the form above. Since $S$ is a totally symmetric set, we know that there is a permutation $\phi$ that sends $(x_{1},y_{1})$ to $(x_{1},y_{2})$ and $(x_{1},y_{2})$ to $(x_{2},y)$, and there is a conjugating element $(h_G,h_H)$ that achieves this permutation. By the fundamental lemma, the image of $S$ under the projection map $\pi_G$ is a totally symmetric subset of $G$. From the permutation $\phi$, we have $(h_G,h_H)(x_{1},y_{1})(h_G,h_H)^{-1} = (x_{1},y_{2})$ and $(h_G,h_H)(x_{1},y_{2})(h_G,h_H)^{-1} = (x_{2},y)$. Using these equations, we derive a contradiction that $h_G$ conjugates $x_1$ to two different elements. To do this, we first apply $\pi_G$ to both sides of the first equation. This yields:
    \begin{align*}
        \pi_G((h_G,h_H)(x_{1},y_{1})(h_G,h_H)^{-1}) &= \pi_G((h_G,h_H))\pi_G((x_{1},y_{1}))\pi_G((h_G,h_H)^{-1}) \\
        &= h_Gx_{1}h_G^{-1} \\
        &= \pi_G((x_{1},y_{2})) = x_{1}.
    \end{align*}
    Similarly, applying $\pi_G$ to both sides of the second equation, we find that:
    \begin{align*}
        \pi_G((h_G,h_H)(x_{1},y_{2})(h_G,h_H)^{-1}) &= \pi_G((h_G,h_H))\pi_G((x_{1},y_{2}))\pi_G((h_G,h_H)^{-1}) \\
        &= h_Gx_{1}h_G^{-1} \\
        &= \pi_G((x_{2},y)) = x_{2}.
    \end{align*}
    Since $h_G$ conjugates $x_{1}$ to itself and $x_{2}$, we have a contradiction. Therefore, $S$ cannot be a totally symmetric subset of $G\times H$.
\end{proof}

\begin{cor}
\label{cor:directproduct}
Let $G$ and $H$ be groups, and let $S = \{(x_1,y_1),\dots,(x_k,y_k)\} \subseteq G \times H$ be a totally symmetric set. All the $x_i$ are equivalent, or they are all distinct. The same holds for $y_i$. Moreover, $S(G \times H) = \max\{S(G),S(H)\}$.
\end{cor}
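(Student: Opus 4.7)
The plan is to derive the first (dichotomy) claim as an immediate application of Lemma~\ref{lem:direct-product} and then use the dichotomy to sandwich $S(G \times H)$ between two matching bounds. For the dichotomy, I would argue by contradiction: suppose the coordinates $x_1,\dots,x_k$ are neither all equal nor all distinct. Then there are indices $i \neq j$ with $x_i = x_j$, and an index $\ell$ with $x_\ell \neq x_i$. Since the elements $(x_i,y_i)$ and $(x_j,y_j)$ are distinct as elements of $S$ and share the same first coordinate, the second coordinates $y_i$ and $y_j$ must differ. Thus $(x_i,y_i), (x_i,y_j), (x_\ell,y_\ell) \in S$ with $x_i \neq x_\ell$ and $y_i \neq y_j$, which is exactly the forbidden configuration ruled out by Lemma~\ref{lem:direct-product}. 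The same argument, swapping the roles of the two coordinates, gives the statement for the $y_i$.

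For the size equality, I would prove both inequalities. The inequality $S(G \times H) \geq \max\{S(G),S(H)\}$ is the easy direction: if $\{x_1,\dots,x_n\}$ is a totally symmetric set in $G$ witnessed by conjugators $g_\sigma$, then $\{(x_1,e_H),\dots,(x_n,e_H)\}$ is a totally symmetric set in $G \times H$ with conjugators $(g_\sigma,e_H)$, and symmetrically for $H$.

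For the reverse inequality $S(G \times H) \leq \max\{S(G),S(H)\}$, I would take any totally symmetric set $S = \{(x_1,y_1),\dots,(x_k,y_k)\}$ in $G \times H$ and split into cases using the dichotomy just established. If the $x_i$ are all distinct, then $\pi_G(S)$ has size $k$, and by the Fundamental Lemma applied to the projection $\pi_G$ it is a totally symmetric set in $G$, giving $k \leq S(G)$. If instead all $x_i$ are equal, then the elements of $S$ are distinguished solely by their second coordinates, so the $y_i$ are all distinct and $\pi_H(S)$ is a totally symmetric set of size $k$ in $H$, giving $k \leq S(H)$. Either way, $k \leq \max\{S(G),S(H)\}$.

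I do not expect a real obstacle here: the content is all in Lemma~\ref{lem:direct-product} and the Fundamental Lemma, and the only step that requires care is verifying that the two cases in the dichotomy exhaust the possibilities in a way compatible with the elements of $S$ being genuinely distinct. The mild subtlety is noticing that the ``all equal'' case for one coordinate forces the other coordinate to be all distinct, which is why the maximum (rather than sum or product) appears.
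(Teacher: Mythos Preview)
Your proposal is correct and follows essentially the same approach as the paper: both derive the dichotomy from Lemma~\ref{lem:direct-product}, establish the upper bound by projecting onto whichever coordinate is distinct and invoking the Fundamental Lemma, and obtain the lower bound by embedding a maximal totally symmetric set of a factor as $T \times \{e_H\}$. Your contradiction argument for the dichotomy is somewhat cleaner than the paper's case enumeration, but the content is the same.
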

\begin{proof}
    By Lemma \ref{lem:direct-product}, if a set $S$ is a totally symmetric subset of $G \times H$, it cannot simultaneously contain elements of the form $(x_1,y_1),(x_1,y_2)$ and $(x_2,y)$. There are thus three possibilities for the elements of $S$:
    \begin{enumerate}
        \item $S$ contains elements of the form $(x_1,y_1)$ and $(x_2,y)$ but none of the form $(x_1,y_2)$. We also can't contain elements of the form $(x_2,y')$ for some $y' \neq b$, since the labels could easily be swapped between the elements. This implies that a totally symmetric set in this case will take the form
        \begin{align*}
            S = \{(x_1,y_1),\dots,(x_n,y_n)\}.
        \end{align*}
        All the $x_i$ are distinct elements of $G$ and each $y_i$ is any element of $H$.
        \item We allow elements of the form $(x_1,y_2)$ and $(x_2,y)$ but none of the form $(x_1,y_1)$. This is equivalent to the previous case by relabeling.
        \item $S$ contain elements of the form $(x_1,y_1)$ and $(x_1,y_2)$, but none of the form $(x_2,y)$. This implies that a set in this case will take the form
        \begin{align*}
            S = \{(x_1,y_1),\dots,(x_1,y_n)\},
        \end{align*}
        where $x_1 \in G$ and all the $y_i \in H$.
    \end{enumerate}
    
    \hspace{\parindent}The above implies that a totally symmetric set $S = \{(x_1,y_1),\dots,(x_k,y_k)\}$ must have all the $x_i$ equivalent or all distinct. The same can be said of the $y_i$ by the symmetry of the direct product.
    
    \hspace{\parindent}Now that we know the totally symmetric sets of $G \times H$ must take on these specific forms, we can first show that $S(G \times H) \leq \max\{S(G),S(H)\}$. Any non-trivial totally symmetric subset of $G \times H$ will have either the $x_i$ or $y_i$ elements all be distinct. Suppose that we have a totally symmetric set $S'$ such that all the $x_i$ elements are all distinct. By Lemma \ref{lem:fundamental}, the set $\pi_G(S')$ is a totally symmetric subset of $G$, in this case with the same cardinality as $S'$ due to all the $x_i$ being distinct. The same can be said if $S'$ had all the $y_i$ distinct and we mapped it to $H$ by $\pi_H$. Together these imply that a totally symmetric subset of $G \times H$ has cardinality bounded above by $\max\{S(G),S(H)\}$.
    
    \hspace{\parindent}To see that $S(G \times H) = \max\{S(G),S(H)\}$, we will construct a totally symmetric subset of size $\max\{S(G),S(H)\}$ in $G \times H$. Without loss of generality, suppose that $S(G) \geq S(H)$ and let $T = \{x_1,\dots,x_n\}$ be a maximal totally symmetric subset of $G$. Consider the set $T \times \{e_H\} \subset G \times H$. This is a totally symmetric subset of $G \times H$. Any two elements $(x_i,e_H)$ and $(x_j,e_H)$ commute. Additionally, any permutation on the elements of $T \times \{e_H\}$ corresponds to a permutation of $T$. If $g_\sigma$ is the element of $G$ that induces a permutation on $T$, then $(g_\sigma,e_H)$ will be the element that induces the same permutation on $T \times \{e_H\}$.
    
\end{proof}

The following corollary will not be used again in the rest of the paper, but it describes totally symmetric sets in direct products if we ask all the elements to be distinct in each coordinate.
\begin{cor}
    Let $G$ and $H$ be groups, and let $S = \{(x_1,y_1),\dots,(x_k,y_k)\} \subseteq G \times H$ be a totally symmetric set. If all the $x_i$ are distinct and all they $y_i$ are distinct, then $|S|=\min\{S(G), S(H)\}$.
\end{cor}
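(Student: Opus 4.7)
The plan is to bound $|S|$ from above by applying the Fundamental Lemma (Lemma \ref{lem:fundamental}) to the two coordinate projections $\pi_G \colon G \times H \to G$ and $\pi_H \colon G \times H \to H$, and then to exhibit a construction achieving the bound. Both projections are homomorphisms, so by the Fundamental Lemma each of $\pi_G(S)$ and $\pi_H(S)$ is either a totally symmetric set of size $|S|$ or a singleton. The hypothesis that all $x_i$ are distinct is precisely what rules out the singleton alternative for $\pi_G$, since $\pi_G(S) = \{x_1, \ldots, x_k\}$ has exactly $k = |S|$ elements; likewise, distinctness of the $y_i$ rules it out for $\pi_H$. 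Therefore $|S| = |\pi_G(S)| \le S(G)$ and $|S| = |\pi_H(S)| \le S(H)$, giving $|S| \le \min\{S(G), S(H)\}$.

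For the matching lower bound, I would assume without loss of generality that $S(G) \le S(H)$, set $n = S(G)$, and pick a totally symmetric set $\{x_1, \ldots, x_n\} \subseteq G$ of size $n$ together with any totally symmetric set $\{y_1, \ldots, y_n\} \subseteq H$ of size $n$ (which exists because $n \le S(H)$). I would then verify that $T := \{(x_1, y_1), \ldots, (x_n, y_n)\}$ is a totally symmetric subset of $G \times H$: pairwise commutativity is immediate coordinatewise, and given $\sigma \in S_n$, if $g_\sigma \in G$ induces $\sigma$ on the $x_i$ and $h_\sigma \in H$ induces $\sigma$ on the $y_i$, then $(g_\sigma, h_\sigma)$ induces $\sigma$ on $T$. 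By construction, the coordinates in each slot are all distinct, and $|T| = n = \min\{S(G), S(H)\}$, proving the equality.

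The argument is essentially a direct combination of the Fundamental Lemma with the factorwise construction from the proof of Corollary \ref{cor:directproduct}, and there is no genuine obstacle; the only subtlety worth flagging is that the distinctness hypothesis is exactly the data needed to exclude the collapse alternative in the Fundamental Lemma on \emph{both} projections simultaneously, which is what prevents the bound from degenerating (as it does in Corollary \ref{cor:directproduct}, where one coordinate is allowed to collapse and the answer becomes $\max$ rather than $\min$).
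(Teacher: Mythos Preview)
The paper states this corollary without proof, so there is no explicit argument to compare against. Your approach is correct and is precisely what the surrounding material suggests: it mirrors the proof of Corollary~\ref{cor:directproduct} (projections plus the Fundamental Lemma for the upper bound, an explicit coordinatewise construction for the lower bound), which is surely what the authors intended.

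Two small remarks. First, in the lower-bound step you assert that a totally symmetric set of size exactly $n=\min\{S(G),S(H)\}$ exists in $H$ because $n\le S(H)$; this tacitly uses that any subset of a totally symmetric set is again totally symmetric (given $\sigma\in S_n$, extend it to a permutation of the full set by fixing the remaining elements and use the associated conjugator). That is routine but worth one sentence. Second, read literally the corollary asserts $|S|=\min\{S(G),S(H)\}$ for \emph{every} such $S$, which fails already for singletons; your two-direction argument correctly interprets the statement as describing the \emph{maximal} size of such an $S$, and that is clearly the intended meaning.
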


\hspace{\parindent}As shown, totally symmetric sets in direct products of groups behave in a straightforward way. We might now ask whether the same behavior persists in additional group constructions, such as free products, semi-direct products, group extensions. While the latter two are much more difficult to answer, it turns out that we can say the following about totally symmetric sets of the free product of two groups.

\begin{prop}
Let $G$ and $H$ be groups, and let $G*H$ denote their free product. Then $S(G*H) = \max\{S(G),S(H) \}$. Moreover, any totally symmetric subset of $G*H$ is of the form $wSw^{-1}$, where $w \in G*H$, and $S$ is a totally symmetric subset of $G$ or $H$.
\end{prop}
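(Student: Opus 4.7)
Since the elements of any totally symmetric set $S = \{x_{1}, \dots, x_{n}\}$ pairwise commute, they generate an abelian subgroup $A \leq G*H$. My plan is to exploit the structure of abelian subgroups in free products: by the Kurosh subgroup theorem (or a direct normal-form argument), every such $A$ is either infinite cyclic or conjugate into one of the factors $G$ or $H$. The reverse inequality $S(G*H) \geq \max\{S(G), S(H)\}$ is immediate from the corollary to Lemma~\ref{lem:fundamental}, so all of the real work lies in the upper bound and in extracting the explicit form of $S$.

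In the first case, $A \subseteq wKw^{-1}$ for some $w \in G*H$ and $K \in \{G, H\}$. Setting $y_{i} := w^{-1} x_{i} w \in K$, I would show that $T = \{y_{1}, \dots, y_{n}\}$ is itself a totally symmetric set of $K$. Pairwise commutativity transfers immediately. For the conjugation property, if $g_{\sigma} \in G*H$ realizes $\sigma$ on $S$, then $w^{-1} g_{\sigma} w$ realizes $\sigma$ on $T$; the technical input here is the lemma that whenever $y, z \in K \setminus \{e\}$ and $g y g^{-1} = z$ in $G*H$, one must have $g \in K$. I would prove this by a direct reduced-word calculation: if $g$ has reduced length at least two, then $g y g^{-1}$ has reduced length at least three and cannot lie in $K$. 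Together this yields $S = w T w^{-1}$ and $|S| = |T| \leq S(K) \leq \max\{S(G), S(H)\}$.

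In the second case, $A$ is infinite cyclic, so all $x_{i} = t^{a_{i}}$ for some infinite-order $t$. I would repeat the raise-to-powers trick from the proof of Theorem~\ref{thm:free-groups}: if $h$ swaps $t^{a}$ and $t^{b}$, then $(h t^{a} h^{-1})^{b} = t^{b^{2}}$ and also equals $(h t^{b} h^{-1})^{a} = t^{a^{2}}$, forcing $a = \pm b$. Hence $|S| \leq 2$, and the extremal case has the form $\{t^{a}, t^{-a}\}$. It then remains to argue that such a $t$ must already be conjugate into a factor, which returns us to the first case and writes $\{t^{a}, t^{-a}\}$ as $w T w^{-1}$ for a size-two totally symmetric set of $G$ or $H$.

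The main obstacle is closing this last step: showing that if $t \in G*H$ has infinite order and $\{t, t^{-1}\}$ is a totally symmetric set, then $t$ is conjugate into $G$ or $H$. The analogous point in the free-group proof of Theorem~\ref{thm:free-groups} used residual finiteness to extract a ``square root'' of $t$; in the free product setting that argument has to be replaced by a finer analysis, e.g.\ a Bass--Serre tree argument examining how any conjugating element acts on the translation axis of $t$. This is where I would expect the bulk of the delicate work to live.
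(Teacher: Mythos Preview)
Your handling of the ``conjugate of a factor'' case matches the paper's: both conjugate $S$ back into $K$ and then argue that the permuting elements may be taken inside $K$; your malnormality lemma (if $y,z\in K\setminus\{e\}$ and $gyg^{-1}=z$ in $G*H$ then $g\in K$) is exactly the right tool and is what the paper is implicitly using. In the infinite-cyclic case the paper takes a different tack from yours: rather than trying to push $t$ into a factor, it argues directly that $\{t^{a},t^{-a}\}$ cannot be totally symmetric when $t$ is \emph{not} conjugate into a factor, by deducing from $wt^{a}w^{-1}=t^{-a}$ that $w^{2}$ commutes with $t^{a}$ and then running a primitivity/normal-form argument to conclude that $w$ itself already commutes with $t^{a}$, forcing $a=0$.

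Your proposed closing step, however, is not salvageable as stated: the claim that such a $t$ must be conjugate into a factor is simply false. Take $G=H=\mathbb{Z}/2\mathbb{Z}$ with generators $a,b$, so $G*H\cong D_{\infty}$. The element $t=ab$ has infinite order and is not conjugate into either factor (both factors have exponent $2$), yet $ata^{-1}=a(ab)a=ba=t^{-1}$, so $\{t,t^{-1}\}$ is a genuine size-$2$ totally symmetric set. No Bass--Serre axis argument will manufacture a conjugacy into a factor here, because none exists. Worse, this example shows that the proposition itself needs an extra hypothesis (for instance, that the factors have no $2$-torsion): in $D_{\infty}$ one has $S(G*H)=2>1=\max\{S(G),S(H)\}$, and $\{t,t^{-1}\}$ is not of the form $wSw^{-1}$ for any totally symmetric $S$ contained in a factor. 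The paper's primitivity argument tacitly assumes one can take a square root of the swapping element $w$ once $w^{2}=z^{b}$, which is precisely what breaks when $w$ has order $2$; so the obstacle you flagged is real, and it is fatal rather than merely delicate.
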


\begin{proof}
    The direction $S(G*H) \geq \max\{S(G),S(H)\}$ is almost immediate. To show this let $S$ be a totally symmetric subset of either $G$ or $H$. Since $G$ and $H$ both inject into their free product, we have that $S$ is also a totally symmetric subset of $G*H$. Thus, we see $S(G*H) \geq \max\{S(G),S(H)\}$. For the other direction, we will show that every totally symmetric set in $G*H$ comes from a totally symmetric subset of either $G$ or $H$ in an injective manner, thus proving $S(G*H) \leq \max\{S(G),S(H)\}$. 
    
    \hspace{\parindent}To begin, let $w_1$ and $w_2$ be commuting elements of $G*H$. Then $w_1$ and $w_2$ are both in the same conjugate of a factor of $G*H$ or both powers of some element $w \in G*H$ \cite[Corollary 4.1.6]{magnus2004combinatorial}. The former of these means that $w_1$ and $w_2$ are both contained in $xGx^{-1}$ or $xHx^{-1}$ for some element $x \in G*H$. This corollary proves an even stronger statement: if we have a set of pairwise commuting elements $\{x_1,\dots,x_n\}$ where only one is in the conjugate of one of the free factors, then all the elements must be in that same conjugate.
    
    Hence, given a totally symmetric set $S$ in $G*H$, there are three possible forms of the elements of the set:
    \begin{enumerate}
        \item \textbf{Conjugates of $G$:} $S = \{wg_1w^{-1},\dots,wg_nw^{-1}\}$ for $g_i \in G$ and $w \in G*H$.
        \item \textbf{Conjugates of $H$:} $S = \{wh_1w^{-1},\dots,wh_nw^{-1}\}$ for $h_i \in H$ and $w \in G*H$.
        \item \textbf{Powers of the same element:} $S = \{v^{i_1},\dots,v^{i_n}\}$ for $v \in G*H$ and $i_1 \leq i_2 \leq \dots \leq i_n$. 
    \end{enumerate}
    
    \hspace{\parindent}We will first examine the first two of these three cases. Without loss of generality, suppose that we have a totally symmetric set $S$ of the first type, with all its elements in a conjugate of $G$. Since this is a totally symmetric set, any permutation of its elements can be achieved through conjugation. Let $\sigma$ be a permutation of the elements of $S$ and let $w_\sigma \in G*H$ be an element of $G*H$ that achieves this permutation by conjugation. If $\sigma(i) = j$, then $w_\sigma wg_1w^{-1}w_\sigma^{-1} = wg_jw^{-1}$, which implies $w_\sigma = wg_\sigma w^{-1}$ and $g_\sigma g_i g_\sigma^{-1} = g_j$ for $g_\sigma \in G$ . This combined with commutativity of $g_i$ shows $S' = \{g_1,\dots,g_n\}$ is a totally symmetric subset of $G$ with the same size as $S$. Therefore, any totally symmetric subset of the first two forms above can have size at most $\max\{S(G),S(H)\}$.
    
    \hspace{\parindent}In the other case, all elements of $S$ are powers of the same element $v\in G*H$. We can assume $v$ is not an element of a conjugate of one of the factors, since otherwise, all powers of it would be as well, which would then reduce to case $(1)$ and $(2)$. Moreover, all elements with torsion in a free product are conjugates of finite-order elements in one of the factors \cite[Corollary 4.1.4]{magnus2004combinatorial}, implying $v$ must have infinite order. Since our set $S = \{v^{i_1},\dots,v^{i_n}\}$ is totally symmetric, there exists an element $w \in G*H$ such that $wv^{i_1}w^{-1} = v^{i_2}$, $wv^{i_2}w^{-1} = v^{i_1}$, and $w$ fixes all other element of $S$. This reduces to a similar situation as in the proof of Theorem \ref{thm:free-groups} and it has a similar solution. We can show that $i_2 = \pm i_1$ by taking $wv^{i_1}w^{-1} = v^{i_2}$, and exponentiating both sides to the $i_2$th power:
    \begin{align*}
        (v^{i_2})^{i_2} = (wv^{i_1}w^{-1})^{i_2} = wv^{i_1i_2}w^{-1} = (wv^{i_2}w^{-1})^{i_1} = (v^{i_1})^{i_1}.
    \end{align*}
    This implies $v^{i_2^2} = v^{i_1^2}$, so $i_2^2 = i_1^2$ and $i_2 = \pm i_1$. In either case we have now reduced our totally symmetric set $S$ to either a singleton or $\{v^{i_1},v^{-i_1}\}$, as permutations such as the one above must be possible for all pairs of elements in $S$. 
    
    \hspace{\parindent}We now check if $\{v^{i_1},v^{-i_1}\}$ is a totally symmetric set. Again, we must have an element $w \in G*H$ such that $wv^{i_1}w^{-1} = v^{-i_1}$, and $wv^{-i_1}w^{-1} = v^{i_1}$. As we saw in the proof of Theorem \ref{thm:free-groups}, this implies that $w^2$ commutes with $v^{i_1}$. From above, either $w^2$ and $v^{i_1}$ are in the same conjugate of one of the factors of $G*H$, or they are powers of the same element. If $v^{i_1}$ was a conjugate of an element in either $G$ or $H$, $v$ is also a conjugate of an element in $G$ or $H$, contradicting our initial assumption. Therefore, $v^{i_1} = z^a$ and $w^2 = z^b$ for some element $z \in G*H$. We can assume $z$ is primitive, that it can't be written as a power of another element in $G*H$. 
    
    \hspace{\parindent}We now show that $b$ is even and $w = z^{\frac{b}{2}}$. Since $w$ is a freely reduced word in $G$ and in $H$, it either has even or odd length. If $w$ has even length, then its starting and ending elements cannot both be from the same group. Thus, $w^2$ is exactly the concatenation of $w$ with itself, with no cancelling or reducing involved. Since $z$ is reduced and primitive, $w$ is a concatenation of $\frac{b}{2}$ copies of $z$. Alternatively, suppose that $w$ has odd length, implying it ends and begins with an element from the same factor. If cancellation occurs when concatenating, $w$ and $z$ are both conjugates of the same element $x\in G*H$: $z=xz'x^{-1}, w=xw'x^{-1}$. This reduces to proving that $w'=z'^{\frac{b}{2}}$, where $w', z'$ cannot be written as conjugates of some shorter length element. Thus, assume $w$ and $z$ are not conjugates of a smaller length element. When taking powers of both elements, the only reduction will be happening at the letters where they are concatenated, so we can conclude that $w = z^\frac{b}{2}$.
    
    \hspace{\parindent}Since $w$ and $v^{i_1}$ are both powers of $z$, $w$ commutes with $v^{i_1}$. This implies $v^{i_1} = v^{-i_1}$ and thus, $S$ is a singleton. Hence, there are no non-trivial totally symmetric sets of form $(3)$, and all totally symmetric sets of a free product come from conjugates of totally symmetric sets in the factors.
\end{proof}
\hspace{\parindent} We can now see that totally symmetric sets behave in a reasonably nice way with regards to these basic product operations and have a full proof of Theorem \ref{ref:product-theorem}. However, before we rejoice over this predictable behavior of totally symmetric sets under products, it's worth pointing out a disappointment: at this point, we still don't have a way to construct larger totally symmetric sets from small ones on the group level: The maximum size of totally symmetric sets under direct and free products are limited by the totally symmetric sets within its factors. It has been a theme throughout this paper that large totally symmetric sets are rare. 


\subsection{Semi-direct Product of Cyclic Groups}

\begin{lem}
\label{lem:inverses}
Let $G$ be a group and $S \subset G$ a totally symmetric subset of $G$ such that $g, g^{-1} \in S$ for some $g \in G$. Then $S = \{g, g^{-1}\}$.
\end{lem}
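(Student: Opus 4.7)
The plan is to fix an enumeration $S = \{x_1, x_2, \ldots, x_n\}$ with $x_1 = g$ and $x_2 = g^{-1}$, and then argue by contradiction that no third element $x_3$ can exist. The key observation is that conjugation commutes with inversion: if some $h \in G$ centralizes $g$, it also centralizes $g^{-1}$, and conversely. This will let me leverage the presence of the inverse pair to force a conjugating element to fix $g$ itself.

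Concretely, I would first note that since $S$ is totally symmetric, all its elements commute pairwise (so in particular $g^{-1}$ commutes with every $x_i$, though this fact will be automatic from what follows). Then, assuming for contradiction that there is some $x_3 \in S$ with $x_3 \neq g, g^{-1}$, I apply the total symmetry property to the transposition $\sigma = (1\ 3) \in S_n$, which fixes the index $2$. This produces an element $h \in G$ satisfying $h x_i h^{-1} = x_{\sigma(i)}$ for every $i$. In particular, two equations must hold simultaneously:
\begin{align*}
h g h^{-1} &= x_3, \\
h g^{-1} h^{-1} &= g^{-1}.
\end{align*}
The second equation is equivalent to $h g h^{-1} = g$, so combining with the first gives $x_3 = g$, contradicting our choice of $x_3$.

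This shows that $S$ contains no elements beyond $g$ and $g^{-1}$, so $S = \{g, g^{-1}\}$ as claimed. The proof is short and self-contained; the only subtlety is recognizing that the transposition $(1\ 3)$ (as opposed to, say, $(1\ 2)$) is the right permutation to invoke, because fixing the index of $g^{-1}$ is exactly what rigidifies the conjugator enough to yield the contradiction. I do not anticipate any genuine obstacle here — no delicate normal form or structural hypothesis on $G$ is needed, only the defining properties of a totally symmetric set.
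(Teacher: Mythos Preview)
Your proposal is correct and is essentially the same argument as the paper's: both exploit a permutation that fixes one member of the pair $\{g,g^{-1}\}$ while sending the other to a hypothetical third element, then use the fact that conjugation commutes with inversion to force that third element to coincide with one already in the pair. The only cosmetic difference is that the paper fixes $g$ and moves $g^{-1}$, whereas you fix $g^{-1}$ and move $g$.
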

\begin{proof}
    Label the elements of the totally symmetric set $S$ as $S = \{g, g^{-1}, x_1, \dots, x_n\}$ for some non-negative integer $n$. Since $S$ is a totally symmetric set, there exists an element $h \in G$ such that $hgh^{-1} = g$ and $hg^{-1}h^{-1} = x_i$ for one of the $x_i \in S$. Inverting both sides of the former of these equalities shows $hg^{-1}h^{-1} = g^{-1}$, thus we must have that $x_i = g^{-1}$. This process can be continued for all elements of $S$, hence, $S = \{g, g^{-1}\}$.
\end{proof}

\begin{manualprop}{\ref{prop:semi-cyclic}}
    For $p$ prime and $m$ any integer where $p|m$, $S(\mathbb{Z}_p \ltimes \mathbb{Z}_m) = 2$.
\end{manualprop}

\begin{proof}
    We use the presentation $\mathbb{Z}_p \ltimes \mathbb{Z}_m=\{s,r|r^p=e,s^m=e, srs^{-1}=r^k\}$ for the proof. A typical element in the group is of the form $r^as^b$, where $a\in\mathbb{Z}/p\mathbb{Z}$ and $b\in\mathbb{Z}/m\mathbb{Z}$. We first investigate the conditions for two elements, $r^as^b$ and $r^xs^y$, to commute.:
    \begin{equation*}
    \begin{split}
        r^as^br^xs^y &= r^xs^yr^as^b \\
        r^{a-x}s^br^s &= s^yr^as^{b-y}\\
        r^{a-x}s^br^s &= s^yr^as^{-y}s^b\\
        r^{a-x}s^br^s &= r^{ak^y}s^b\\
        r^{a-x-ak^y}(s^br^xs^{-b}) &= e\\
        r^{a-x-ak^y}r^{xk^b} &= e\\
        r^{a-x-ak^y+xk^b} &= e.\\
    \end{split}
    \end{equation*}
     
    \hspace{\parindent}By the group presentation, we obtain the following equation:
    \begin{equation}
        a-x-ak^y+xk^b = 0\mod p.
    \end{equation}

     \hspace{\parindent}We put this aside and discuss the conditions for the same two elements, $r^as^b$ and $r^xs^y$, to also be conjugates of each other. Before we go through the calculations, we take a detour to discuss $s^{-1}rs$. By $srs^{-1}=r^k$, it follows that $s^{-1}rs = r^l$, where $l$ is the multiplicative inverse of $k$ modulo $p$, which exists because $p$ is prime. Suppose there exists some element $r^es^f$ that conjugates $r^as^b$ to $r^xs^y$. We can use the group presentation to show that the equation $b+y=0\mod m$ must hold:
    \begin{equation*}
    \begin{split}
        r^es^fr^as^bs^{-f}r^{-e} &= r^xs^y\\
        r^es^bs^fs^{-b}r^as^bs^{-f}r^{-e} &= r^xs^y\\
        r^es^bs^fr^{al^b}s^{-f}r^{-e} &= r^xs^y\\
        r^es^br^{al^bk^f-e} &= r^xs^y\\
        r^es^br^{al^bk^f-e}s^{-b}s^{-y}s^b &= r^x\\
        r^{e+(al^bk^f-e)k^b}s^{b+y}&=r^x.
    \end{split}
    \end{equation*}
    
    \hspace{\parindent}Deciphering the exponents, we now have the following relations: $b+y=0\mod m$, and $e+(al^bk^f-e)k^b=x\mod p$. Recall $b, y \in \mathbb{Z}/m\mathbb{Z}$ and thus, $ 0 \leq b, y < m$. The relation $b+y=m\mod m$ then implies that either $b = y = 0$, or $y = m - b$. 
    
    \hspace{\parindent} The case that $b=y=0$ amounts to considering elements of the form $r^a$ and $r^x$ as candidate elements of the totally symmetric set. The commuting relation $(1)$ shows elements of this form always commute, and the second of the two conjugation relations reduces to $ak^f \equiv x\mod p$ given $b=0$ and $k^b \equiv 1\mod p$. Therefore, choosing a conjugating element $r^es^f$ determines $x$, and the relation shows the choice of $e$ will not affect the conjugation. It is important to note that because $f$ is an integer between $0$ and $m-1$, multiple choices of $f$ could lead to the same $x$. Let $0 \leq f \neq f' \leq m-1$ such that $k^{f} \equiv k^{f'} \mod p$. This implies that $k^{f - f'} \equiv 1\mod p$ and thus, $f$ and $f'$ must differ by a multiple of $p-1$. 
    
    \hspace{\parindent} Choose $f$ as above to determine an $x$, and let $f' = f + n(p-1)$. We conclude by the following calculation that $r^es^{f'}$ acts on $r^x$ by conjugation in the same way that $r^es^{f}$ does, i.e. different choices of $f$ that determine the same $x$ will act the same by conjugation: 
    \begin{align*}
        r^{e}s^{f'}r^{x}s^{-f'}r^{-e} = r^{e}r^{xk^{f'}}r^{-e} = r^{xk^{f + n(p-1)}} = r^{xk^{f}}.
    \end{align*}
   
    \hspace{\parindent} In order for $r^a$ and $r^x$ to form a totally symmetric set, we need an element that conjugates $r^a$ to $r^x$ to also conjugate $r^x$ back to $r^a$. Hence, we need $xk^f \equiv a\mod p$. Combining this with $ak^f \equiv x\mod p$, we have that $k^{2f} \equiv 1\mod p$ and thus, $k^f \equiv \pm 1\mod p$. This tells us that $x = \pm a$, and one cannot make a larger totally symmetric set containing the elements $r^a$ and $r^x$.
    
    \hspace{\parindent}For the other case, assume that $y = m - b$, and use $(1)$ to derive an equation to determine the following equations expressing $x$ in terms of $a$ and $b$:
    \begin{equation} 
    \begin{split}
        a-x-ak^{m - b}+xk^b = 0 \mod p \\
        x(k^b - 1) = a(k^{m-b} - 1) \mod p. \\
    \end{split}
    \end{equation}

    \hspace{\parindent}If $k^b-1\neq 0\mod p$, then given $a,b$, there is a unique solution for $x$. Otherwise, we consider the case where $k^b - 1 \equiv 0\mod p$.
    
    \hspace{\parindent} From above we know that $x = \pm a$ and thus we have the following three elements to consider for a totally symmetric set, $r^as^b, r^as^{-b}$, and $r^{-a}s^{-b}$. We know that we can construct two totally symmetric sets from these elements, namely $\{r^{a}s^b, r^as^{-b}\}$ and $\{r^{a}s^b, r^{-a}s^{-b}\}$, but we also need to check if all three elements can form a totally symmetric set. Under our assumption that $k^b \equiv 1\mod p$, we can show that $r^as^b$ and $r^{-a}s^{-b}$ are inverses:
    \begin{align*}
        r^as^br^{-a}s^{-b} = r^a(s^brs^{-b})^{-a} = r^{a}(r^{k^b})^{-a} = r^{a}r^{-a} = e.
    \end{align*}
    
    By Lemma \ref{lem:inverses}, this inhibits all three elements from being in a totally symmetric set together, so the maximal size of a totally symmetric set in $\mathbb{Z}_p \ltimes \mathbb{Z}_m$ is 2.
    
\end{proof}

\section{The Stabilizer}
\label{sec: stabilizer}

\hspace{\parindent}In this section, we introduce the stabilizer of the totally symmetric set and use it to bound the cardinality of totally symmetric sets in groups. In particular, we will show that the totally symmetric sets in odd-ordered finite groups and solvable groups are small.\\

\hspace{\parindent}Let $G$ be a group, $S$ a totally symmetric subset of size $n$ in $G$, and $\text{Stab}_G(S)$ the stabilizer of $S$ in $G$ under conjugation. We claim that $\text{Stab}_G(S)$ surjects onto $S_n$, the symmetric group on $n$ letters. The stabilizer acts on $S$ by conjugation, so we can explicitly construct a homomorphism $\varphi: \text{Stab}_G(S)\rightarrow Sym(S)\cong S_n$ described by sending $\gamma \in\text{Stab}_G(S)$ to the automorphism of $S$ given by $s\rightarrow \gamma s\gamma^{-1}$. By definition, every permutation of $S$ is realized as a conjugation by some $\gamma \in \text{Stab}_G(S)$, so this homomorphism is surjective. Equivalently, we can write this as a short exact sequence, where $k$ denotes the kernel of $\varphi$:
$$
1\xrightarrow{}k\xrightarrow{}\text{Stab}_G(S)\xrightarrow{}S_n\xrightarrow{}1
$$

\hspace{\parindent}In the case that $G$ is of finite order, $|G|\geq|\text{Stab}_G(S)|=|k||S_n|=n!|k|$. The kernel consists of elements that commute with all elements of the totally symmetric set, or equivalently the intersection of centralizers of each element. In particular, the subgroup generated by the totally symmetric set will be a subgroup of the kernel $k$ by commutativity. We can relate the cardinality of the group $G$ to the size of its totally symmetric sets through bounding the size of the kernel. Using a similar technique, Chudnovsky and the authors showed that for braid groups and their commutator subgroups, the required cardinality of $G$ grows super-exponentially as $n$ increases \cite{chudnovsky2020finite}.

\hspace{\parindent}We use the existence of the surjection from the stabilizer to the symmetric group to bound the sizes of totally symmetric sets in the following groups.

\begin{prop}
\label{prop:odd-order}
    Let $G$ be a finite group with odd order. Then $S(G) = 1$.
\end{prop}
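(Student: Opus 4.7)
The plan is to obtain a contradiction directly from the surjection $\varphi: \mathrm{Stab}_G(S) \twoheadrightarrow S_n$ that was set up in the paragraphs preceding the proposition, combined with Lagrange's theorem. Suppose for contradiction that $S \subset G$ is a totally symmetric set with $|S| = n \geq 2$. By the first isomorphism theorem applied to $\varphi$, the short exact sequence
\[
1 \to k \to \mathrm{Stab}_G(S) \to S_n \to 1
\]
gives $|\mathrm{Stab}_G(S)| = |k| \cdot n!$, so $n!$ divides $|\mathrm{Stab}_G(S)|$. Since $\mathrm{Stab}_G(S) \leq G$, Lagrange's theorem then forces $n!$ to divide $|G|$.

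Because $n \geq 2$, the factorial $n!$ is even, so this would make $|G|$ even, contradicting the hypothesis that $|G|$ is odd. Hence no totally symmetric set of size at least $2$ can exist in $G$, and singleton totally symmetric sets always exist trivially, giving $S(G) = 1$.

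A slightly more concrete phrasing of the same argument, which might be preferable for exposition, is to pick any transposition $\tau \in S_n$, lift it to an element $\gamma \in \mathrm{Stab}_G(S)$ with $\varphi(\gamma) = \tau$, and observe that the order of $\gamma$ must be a multiple of the order of $\tau$, namely $2$. Then $\gamma^{\mathrm{ord}(\gamma)/2}$ is an element of order exactly $2$ in $G$, contradicting the odd-order hypothesis.

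The main ``obstacle'' is essentially nonexistent: once one has internalized the stabilizer-surjects-onto-$S_n$ machinery developed in the preceding paragraphs, the result is an immediate parity argument. The only point to watch is that the surjection is genuinely onto all of $S_n$ (so that a transposition has a preimage), which is exactly what the definition of a totally symmetric set guarantees.
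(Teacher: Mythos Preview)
Your proof is correct and follows essentially the same approach as the paper: use the surjection $\mathrm{Stab}_G(S)\twoheadrightarrow S_n$ together with Lagrange's theorem to conclude that $n!$ divides $|G|$, and then note that $n!$ is even for $n\ge 2$. The alternative phrasing via lifting a transposition is a nice touch but not needed.
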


\begin{proof}
    Let $|G| = 2n + 1$ for some $n \in \mathbb{N}$, and let $S\subset G$ be a totally symmetric set in $G$. From the short exact sequence above, the order of $Sym(S)$ must divide the order of $\text{Stab}_G(S)$. Moreover, since the order of $\text{Stab}_G(S)$ divides the order of the whole group $G$, we have that $|S|! \,\,|\,\, |G|$ and hence $|S|! \,\,|\,\, 2n + 1$. Since for $|S| \geq 2$, $|S|!$ is even, we must have that $|S| = 1$. 
\end{proof}

\begin{thm}
\label{thm:solvable}
    Let $G$ be a solvable group. Then $S(G) \leq 4$.
\end{thm}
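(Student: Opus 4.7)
The plan is to leverage the surjection $\text{Stab}_G(S) \twoheadrightarrow S_n$ established just before Proposition \ref{prop:odd-order}, together with the classical fact that $S_n$ is solvable if and only if $n \leq 4$. The argument from the odd-order case used the order-theoretic obstruction (an even number cannot divide an odd number); here I will replace that with a structural obstruction coming from the derived series.

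First, I would recall that a subgroup of a solvable group is solvable and that a quotient of a solvable group is solvable. So, given a solvable group $G$ and a totally symmetric subset $S \subset G$ of size $n$, the stabilizer $\text{Stab}_G(S)$ is a subgroup of $G$, hence solvable. By the short exact sequence
\[
1 \longrightarrow k \longrightarrow \text{Stab}_G(S) \longrightarrow S_n \longrightarrow 1
\]
displayed above, the symmetric group $S_n$ appears as a quotient of $\text{Stab}_G(S)$, and therefore $S_n$ itself must be solvable.

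Next, I would invoke the standard fact that $S_n$ is solvable precisely when $n \leq 4$: for $n \geq 5$ the alternating group $A_n$ is simple and non-abelian, so the derived series of $S_n$ stabilizes at $A_n \neq 1$. Combining this with the previous paragraph forces $n \leq 4$, which is exactly the bound $S(G) \leq 4$.

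There is no real obstacle in this argument; the only substantive input is the solvability threshold for $S_n$, and everything else is a direct application of the stabilizer-to-symmetric-group surjection. It is worth noting, however, that the bound $4$ is in some sense the best one can hope for from this method, since the argument is blind to how the kernel $k$ interacts with the quotient; sharper bounds (if any) would presumably require more than just solvability of $G$.
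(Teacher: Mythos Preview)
Your proof is correct and follows essentially the same route as the paper: pass to the solvable subgroup $\text{Stab}_G(S)$, use the surjection onto $S_n$ to conclude that $S_n$ is a solvable quotient, and invoke the non-solvability of $S_n$ for $n\geq 5$. Your closing remark about the method being blind to the kernel also matches the paper's comment that sharpness of the bound is open.
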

\begin{proof}
    Let $S \subset G$ be a totally symmetric subset of $G$. Since $\text{Stab}_G(S)$ is a subgroup of $G$, and $G$ is solvable, $\text{Stab}_G(S)$ is also solvable. Moreover, from the short exact sequence above, we know that $Sym(S)$ is a quotient of $\text{Stab}_G(S)$, so it is solvable. For $n \geq 5$, $S_n$ is not solvable. Thus, $|S| \leq 4$.
\end{proof}

We do not know if this bound is sharp. In other words, it's an open question whether there are totally symmetric sets in solvable groups of size 3 or 4.

\section{Corollaries for Homomorphisms}
\label{section: corollaries}

\hspace{\parindent} In this last section, we summarize the results and display various corollaries on possible homomorphisms between the groups discussed in this paper. The corollaries on the homomorphisms we present stem from the following two corollaries of the fundamental lemma of totally symmetric sets:

\begin{cor}
Let $G$ and $H$ be groups with $S(G) > S(H)$. No homomorphism $f: G \xrightarrow[]{} H$ can be injective.
\end{cor}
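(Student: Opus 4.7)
The plan is to argue by contradiction and invoke the Fundamental Lemma (Lemma \ref{lem:fundamental}) directly. Suppose toward contradiction that some injective homomorphism $f: G \to H$ exists. Since every singleton is a totally symmetric set, $S(H) \geq 1$, and so the hypothesis $S(G) > S(H)$ forces $S(G) \geq 2$.

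Next, I would choose a totally symmetric subset $S \subset G$ realizing the maximum, i.e.\ with $|S| = S(G) \geq 2$. By the Fundamental Lemma applied to $f$, the image $f(S)$ is a totally symmetric set in $H$ with either $|f(S)| = |S|$ or $|f(S)| = 1$. Injectivity of $f$ rules out the collapse case (because $|S| \geq 2$), so $|f(S)| = S(G)$. But this exhibits a totally symmetric subset of $H$ of size $S(G)$, which gives $S(H) \geq S(G)$, contradicting $S(G) > S(H)$.

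There is essentially no obstacle here: the statement is almost immediate from the dichotomy built into the Fundamental Lemma. The only subtlety worth isolating is the edge case $S(G) = 1$, which is precluded by the strict inequality together with the universal lower bound $S(H) \geq 1$; once that is noted, the proof is a one-line application of Lemma \ref{lem:fundamental}.
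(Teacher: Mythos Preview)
Your proof is correct and follows essentially the same route as the paper: pick a maximal totally symmetric set in $G$, apply the dichotomy of the Fundamental Lemma, and use $S(G) > S(H) \geq 1$ to rule out the injective image. The only cosmetic difference is that the paper phrases it directly rather than by contradiction.
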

\begin{proof}
    Let $S \subset G$ be a totally symmetric set of size $S(G)$ and $f:G \xrightarrow[]{} H$ be a homomorphism. From the fundamental lemma, $S$ maps to a totally symmetric set in $H$ of size $S(G)$ or a singleton. Since $S(G) > S(H)\geq 1$, there are no totally symmetric sets in $H$ of size $S(G)$, and $S$ maps to a singleton. Since $S$ is not a singleton,  $f$ is not injective.
\end{proof}

\hspace{\parindent} This corollary tells us that $S(G)$ serves as an obstruction to $G$ being a subgroup of another group. A further corollary that has been used frequently in the recent results on braid groups using totally symmetric sets is the following:

\begin{cor}
\label{cor:cyclic}
Let $B_n$ be the braid group on $n$ strands for $n \geq 5$ and let $G$ be a group such that $S(G) < \lfloor \frac{n}{2} \rfloor$, then any homomorphism $f:B_n \xrightarrow[]{} G$ is cyclic, i.e. it factors through $\mathbb{Z}$.
\end{cor}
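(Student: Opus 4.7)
The plan is to apply the fundamental lemma (Lemma \ref{lem:fundamental}) to the totally symmetric set of odd Artin generators discussed in Example 1, and then use the normal generation of the commutator subgroup to conclude.

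First, I would recall from Example 1 that the set $X = \{\sigma_1, \sigma_3, \ldots, \sigma_{2\lfloor n/2 \rfloor - 1}\}$ of odd Artin generators is a totally symmetric set in $B_n$ of size $\lfloor n/2 \rfloor$. Let $f : B_n \to G$ be any homomorphism, and consider $f(X) \subseteq G$. By the fundamental lemma, $f(X)$ is a totally symmetric subset of $G$ whose size is either $\lfloor n/2 \rfloor$ or $1$. Since by hypothesis $S(G) < \lfloor n/2 \rfloor$, the first option is ruled out, and hence $f(X)$ must be a singleton, i.e. $f(\sigma_{2i-1}) = f(\sigma_{2j-1})$ for all valid $i, j$.

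In particular, $f(\sigma_1) = f(\sigma_3)$, which gives $f(\sigma_1 \sigma_3^{-1}) = e$, so $\sigma_1 \sigma_3^{-1} \in \ker f$. Since $\ker f$ is a normal subgroup, it contains the normal closure of $\sigma_1 \sigma_3^{-1}$ in $B_n$. By the result of Lin \cite{lin2004braids} cited in Example 1, this normal closure is precisely the commutator subgroup $B_n'$. Therefore $B_n' \leq \ker f$, and $f$ factors through the abelianization $B_n / B_n' \cong \mathbb{Z}$, giving a cyclic homomorphism.

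The argument is essentially packaging Example 1 into a formal corollary, so there is no real obstacle — the only input beyond the fundamental lemma is the fact that $B_n'$ is normally generated by $\sigma_1 \sigma_3^{-1}$, which is cited rather than proven. One minor check is that $n \geq 5$ ensures $\lfloor n/2 \rfloor \geq 2$, so that $X$ contains at least the two distinct generators $\sigma_1$ and $\sigma_3$ and the collapse of $f(X)$ to a point gives nontrivial information; this is exactly what the hypothesis guarantees.
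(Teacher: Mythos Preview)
Your proof is correct and is essentially the argument the paper gives (in Example~1) for this corollary: apply the fundamental lemma to the totally symmetric set of odd Artin generators, collapse it to a singleton using $S(G) < \lfloor n/2 \rfloor$, and then use that $B_n'$ is normally generated by $\sigma_1\sigma_3^{-1}$ to conclude that $f$ factors through the abelianization $\mathbb{Z}$.
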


\hspace{\parindent} A refinement of this statement using totally symmetric sets by Chudnovsky and the authors \cite{chudnovsky2020finite} and improved by Caplinger and Kordek \cite{caplinger2020small} says that this is true if $|G| < \lfloor \frac{n}{2} \rfloor! \cdot 3^{\lfloor \frac{n}{2} \rfloor - 1}$. This bound was recently improved upon by Scherich and Verberne \cite{scherich2020finite}.

\hspace{\parindent} The table below summarizes the bounds on totally symmetric sets derived in this paper. Applying the above two corollaries gives us a list of groups which can have no injective homomorphisms between them. For example, $\Z_p\ltimes \Z_{mp}$ cannot be a subgroup of the Baumslag--Solitar group $BS(1,n)$. Likewise, we obtain many example of groups that the braid groups only map cyclically to. For example, by Corollary \ref{cor:cyclic} any homomorphism from $B_n$ where $n\geq 10$ to solvable groups must have cyclic image. Using the last line, we can take arbitrary products of the groups in this table, and $B_n$ with $n\geq 10$ will map cyclically to it. For example, any homomorphism from $B_n$ into $(BS(1,420)*(\Z_{2017}\ltimes \Z_{8068}))\times D_{914}$ has cyclic image.

\vspace{1.5em}
\centering
\begin{tabular}{@{} *5l @{}}    \toprule
$S(G)$ & Group &&&  \\\midrule
1    & Abelian    \\ 
     & Free Group ($F_n$)  \\
     & Odd Order Group \\
     & $BS(1,n)$, $n \neq 1$ \\
2    & Dihedral ($D_{2n}$)   \\ 
     & $\mathbb{Z}_p \ltimes \mathbb{Z}_{np}$  \\
     & $BS(1,-1)$ \\ 
$\leq 4$ & Solvable Groups \\ 
Max$\{S(G),S(H)\}$    & Direct Product, $G \times H$    \\ 
                      & Free Product, $G*H$  \\\bottomrule
 \hline
\end{tabular}

\bibliographystyle{alpha}
\bibliography{references}

\end{document}